\documentclass[10pt]{article}

\usepackage[margin=1in]{geometry}

\usepackage{mathtools}
\usepackage{amssymb, amsthm, amsfonts}
\usepackage{mathrsfs}

\usepackage{microtype}

\usepackage{enumitem}

\numberwithin{equation}{section}

\theoremstyle{plain}
\newtheorem{theorem}{Theorem}[section]
\newtheorem{lemma}[theorem]{Lemma}
\newtheorem{proposition}[theorem]{Proposition}

\theoremstyle{definition}

\usepackage{titlesec}
\titleformat{\section}
  {\normalfont\large\bfseries\centering}
  {\thesection}{1em}{}
\titleformat{\subsection}
  {\normalfont\normalsize\bfseries}
  {\thesubsection}{1em}{}
\titlespacing*{\section}{0pt}{2.5ex plus 1ex minus .2ex}{1.5ex plus .2ex}
\titlespacing*{\subsection}{0pt}{2.0ex plus 1ex minus .2ex}{1.0ex plus .2ex}

\usepackage[hidelinks]{hyperref}

\title{On the Existence of Integers with at Most 3 Prime Factors Between Every Pair of Consecutive Squares}

\author{Peter J. Campbell}

\date{}

\begin{document}
\maketitle

\begin{center}
\small
\textit{Corresponding author}: Peter J. Campbell (\texttt{p.campbell1@uq.edu.au})\\
\textit{Affiliation}: School of Mathematics and Physics, The University of Queensland, Brisbane, Australia\\
\textit{Keywords}: computational number theory, sieve methods, almost primes, short intervals\\
\textit{2020 Mathematics Subject Classification}: 11N05, 11N36, 11Y11, 11Y16
\end{center}

\begin{abstract}
We prove an explicit analogue of Legendre's conjecture for almost primes. Namely, for every integer \(n \geq 1\), the interval \((n^2,(n+1)^2)\) contains an integer having at most \(3\) prime factors, counted with multiplicity. This improves the previous best result of Dudek and Johnston, who showed that every such interval contains an integer with at most \(4\) prime factors. The proof is divided into two ranges. For \(n^2 \leq 10^{31}\), we use prior computational results on primes in short intervals between consecutive squares, together with explicit bounds on maximal prime gaps. For \(n^2 > 10^{31}\), we give a sieve-theoretic argument with explicit constants, adapting Richert's logarithmic weights to intervals between consecutive squares and employing an explicit linear sieve of Bordignon, Johnston, and Starichkova.
\end{abstract}

\section{Introduction}\label{sec:intro}

A central problem in number theory is to understand how primes are distributed in short intervals. A classical example is Legendre's conjecture: for every integer \(n \geq 1\), the interval \((n^2,(n+1)^2)\) contains a prime. Despite its elementary formulation, this conjecture presents substantial analytic difficulty and remains open even under the Riemann hypothesis. In 1912, Landau included it in his list of open problems in the theory of prime numbers, underscoring its enduring difficulty; see \cite{pintz2009landau} for historical discussion.

Detecting primes in this rigid family of intervals provides a benchmark for current methods in short-interval prime distribution. One way to quantify progress is to relax the problem by enlarging the intervals. For \(\alpha>2\), one asks whether \((n^\alpha,(n+1)^\alpha)\) always contains a prime. After a long sequence of improvements, Baker, Harman, and Pintz showed that this holds for \(\alpha=2.106\) when \(n\) is sufficiently large \cite{baker_harman_pintz2001}.

In this paper we pursue a different relaxation that keeps the interval length but replaces primes with \emph{almost primes}\footnote{Here and throughout, an \emph{almost prime} is a positive integer with a small number of prime factors, counted with multiplicity.} in these intervals. Let \(\Omega(m)\) denote the number of prime factors of \(m\), counted with multiplicity. Thus \(\Omega(m)=1\) if and only if \(m\) is prime, while \(\Omega(m)\leq 2\) means that \(m\) is either prime or the product of two primes (not necessarily distinct). This leads to the following problem: determine how small one can take \(k\) so that for every \(n\geq 1\) there exists an integer \(a\) with
\[
a\in (n^2,(n+1)^2)\quad\text{and}\quad \Omega(a)\leq k.
\]

In 1920, Brun \cite[pp.~24--25]{brun1920crible} showed that for sufficiently large \(n\) one may take \(k=11\). With the development of sieve methods, Chen \cite{chen1975almostprimesinterval} later improved this to \(k=2\) for sufficiently large \(n\). It is much more difficult, however, to obtain an explicit statement that holds for all \(n\geq 1\). Using an explicit form of the linear sieve due to Bordignon, Johnston, and Starichkova~\cite{bordignon_johnston_starichkova2025}, Dudek and Johnston~\cite[Theorem~1.2]{dudek_johnston2026apba} proved that for every integer \(n\geq 1\) there exists \(a\in(n^2,(n+1)^2)\) such that \(\Omega(a)\leq 4\). Their argument is based on an explicit weighted sieve of Kuhn~\cite{kuhn1954neue_abschaetzungen} together with computational verification in the small-\(n\) range.

A closely related framework was used by Johnston, Sorenson, Thomas, and Webster~\cite{johnston2026primesprimescubes} for intervals between consecutive cubes. Starting from the same explicit-sieve backbone, they replace Kuhn's weights with Richert's logarithmic weights~\cite{richert1969selberg_weights} and prove that for every \(n\geq 1\) there exists \(a\in(n^3,(n+1)^3)\) with \(\Omega(a)\leq 2\). They report that Richert's weights give much better numerical performance than Kuhn's weights. The square-interval problem is naturally more difficult computationally than the cube-interval problem, since the intervals \((n^2,(n+1)^2)\) are substantially shorter than \((n^3,(n+1)^3)\), leaving less flexibility in the finite verification needed to bridge the small-\(n\) and large-\(n\) regimes.

Motivated by the numerical improvements obtained from Richert's logarithmic weights in the cube setting, we adapt the methods of Johnston, Sorenson, Thomas, and Webster~\cite{johnston2026primesprimescubes} to the square-interval problem considered by Dudek and Johnston. Our result improves the bound on the number of prime factors.
\begin{theorem}\label{thm:main_omega3}
    For every integer \(n\geq 1\), there exists \(a\in(n^2,(n+1)^2)\) such that \(\Omega(a)\leq 3\).
\end{theorem}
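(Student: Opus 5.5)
The proof splits at \(n^2 = 10^{31}\), as announced in the abstract. For \(n^2 \leq 10^{31}\) we do not need almost primes at all: it suffices to exhibit a prime in \((n^2,(n+1)^2)\). For the bottom of this range we invoke the existing computational verifications of Legendre's conjecture for squares up to some explicit height. Above that height we use the explicit bounds on maximal prime gaps below \(10^{31}\). Such a bound has the form \(g(x) < c\log^2 x\), or comes from tabulated maximal gaps. Since \(c\log^2 x\) is far smaller than \(2n+1 \approx 2\sqrt{x}\) throughout the range, every interval \((n^2,(n+1)^2)\) in it contains a prime. The only care needed is to make the verified ranges overlap.

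For \(n^2 > 10^{31}\), set \(x = n^2\) and let \(\mathcal{A} = \{a \in \mathbb{Z} : n^2 < a < (n+1)^2\}\), so that \(|\mathcal{A}| = 2n \approx 2x^{1/2}\). The sieve dimension is \(\kappa = 1\), the density is \(g(p) = 1/p\), and the remainders satisfy \(|r_d| \leq 1\). We run Richert's logarithmic weighted sum
\[
W = \sum_{\substack{a\in\mathcal{A}\\ (a,P(z))=1}} \Bigl(1 - \sum_{\substack{z\le p<y\\ p\mid a}} w_p\Bigr), \qquad w_p = \frac{1}{\lambda}\Bigl(1 - \frac{\log p}{\log y}\Bigr),
\]
with parameters \(z = x^{1/v}\) and \(y = x^{1/u}\). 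We choose these so that any \(a\) with \(W\)-weight positive has \(\Omega(a) \leq 3\). Since \(a < 4x\), this amounts to choosing \(\lambda\) with \(\lambda \approx 4 - u\)-type constraints, adapted from the cube case of Johnston--Sorenson--Thomas--Webster. Integers \(a\) divisible by \(p^2\) with \(p \geq z\) contribute \(O(x^{1/2}/z)\). We subtract them trivially, which is harmless since \(z\) is a power of \(x\).

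We then bound \(W\) from below. The main term \(S(\mathcal{A},z)\) gets a lower bound from the explicit linear sieve of Bordignon--Johnston--Starichkova at level \(D = x^{1/2-\epsilon}\). Each \(S(\mathcal{A}_p,z)\) gets an upper bound from the same sieve at level \(D/p\). Here we use the explicit Mertens-type product \(V(z)=\prod_{p<z}(1-1/p)\) and explicit Chebyshev bounds to convert the sum over \(p\) into an integral against \(F(s)\) and \(f(s)\).

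The main obstacle is numerical: we must show the resulting explicit lower bound is positive already at \(x = 10^{31}\). The level \(D\) is only about \(x^{1/2}\), so the sieve parameter \(s = \log D/\log z\) is small. The explicit error terms in the linear sieve, of relative size roughly \(\log^{-1/3}D\) or similar, together with the remainder sum \(\sum_{d<D}\mu^2(d)3^{\omega(d)}\), are large at this height. I would first optimize \(u\), \(v\) and \(\lambda\) numerically so that the limiting Richert integral \(f(s) - \frac{1}{\lambda}\int(\ldots)F(\ldots)\) is as large as possible. Only then would I check that the explicit error terms fit under that margin, and that they remain small for all \(x \geq 10^{31}\), using the monotonicity of the error terms in \(x\). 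If the margin fails at \(10^{31}\), the fallback is to push the prime-gap computation higher or to take \(\epsilon\) smaller.
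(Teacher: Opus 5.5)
\textbf{Genuine gap: the range $n^2\le 10^{31}$.} You propose to find a \emph{prime} in every interval here, and that cannot be done with current knowledge. No unconditional bound $g(x)<c\log^2 x$ for prime gaps is known; that is Cram\'er's conjecture. Explicit unconditional results only give primes in intervals of length $x^{1-o(1)}$ (e.g.\ $x/\log^3 x$). Even the Riemann hypothesis gives only gaps $O(\sqrt{x}\log x)$, which is longer than $2\sqrt{x}$; this is why Legendre's conjecture is open even on RH. The certified maximal-gap tables stop around $6.8\cdot 10^{19}$, where all gaps are at most $1724$. Legendre's conjecture itself is verified only for $n\le 7.05\cdot 10^{13}$, i.e.\ $n^2\lesssim 5\cdot 10^{27}$. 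So for $7.05\cdot 10^{13}<n\le 10^{15.5}$ you would be verifying Legendre's conjecture far beyond the current record, with no tool to do it.

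The missing idea is to spend the slack in $\Omega\le 3$ and produce a semiprime. Fix a prime $p$ (the paper uses $p=14706000011$ for $n^2<10^{30}$ and $p=147058823551$ for $10^{30}\le n^2\le 10^{31}$) so that $I=(n^2/p,(n+1)^2/p)$ has length $(2n+1)/p>1724$ yet lies below $6.8\cdot 10^{19}$. The gap table then gives a prime $q\in I$, and $pq\in(n^2,(n+1)^2)$ has $\Omega(pq)=2$.

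Your fallbacks do not rescue this. The gap computation cannot be pushed to $10^{31}$. Lowering the analytic threshold to about $5\cdot 10^{27}$ is no cheap fix either: with the paper's parameters ($k_1=8$, $k_2=3.17$, $s=3.33$, $\alpha=0.06$) the final margin at $10^{31}$ is only $0.0249\sqrt N/\log X$. Meanwhile the remainder contributions, of sizes $D$ and $X^{1/2-\alpha}$, grow relative to $\sqrt N/\log X$ as $N$ decreases.

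\textbf{The range $N\ge 10^{31}$.} Your outline follows the paper's route: Richert's weights with $\lambda=k+1-k_2$ and $k=3$, the Bordignon--Johnston--Starichkova lower bound for $S(\mathcal A,\mathcal P,z)$, and upper bounds for $S(\mathcal A_p,\mathcal P,z)$ at level $D_p=X^{1/2-\alpha}/p$. But the conclusion rests entirely on explicit numerics you leave undone, so three details need fixing.

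First, define $y$ and $z$ through $X=\max\mathcal A$, not through $x=n^2$. The weight argument gives $\Omega(a)<\lambda+\log a/\log y$, so you need $a\le y^{k_2}$. With $y=x^{1/u}$ and $\lambda=4-u$ you only get $\Omega(a)\le 4$.

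Second, since $\mathcal A$ is an interval and $g(d)=1/d$, you have $|r(d)|\le 1$. The remainder is therefore just $\sum_{d<2D}\mu^2(d)\le 1.22D$. A remainder of the shape $\sum_{d<D}\mu^2(d)3^{\omega(d)}\asymp D\log^2 D$ would wipe out the margin at this height.

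Third, for $p$ near $y$ one has $D_p<z$, where the upper-bound sieve (which requires $D\ge z$) does not apply directly. The paper handles these $p$ via $S(\mathcal A_p,\mathcal P,z)\le S(\mathcal A_p,\mathcal P,D_p)$ and sieving at $s=1$.
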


The main novel ingredient in our proof is the small-\(n\) verification, Lemma~\ref{lem:small_n}. This substantially extends the finite range covered by Dudek and Johnston~\cite{dudek_johnston2026apba}. The lemma combines the verification of Legendre's conjecture up to \(n\leq 7.05\cdot 10^{13}\) by Sorenson and Webster~\cite{sorenson_webster_2025_legendre} with explicit computations of maximal prime gaps~\cite{pzktupel_risinggap}. For the remaining values of \(n\) in the finite range, we use the prime-gap computations to construct a semiprime in \((n^2,(n+1)^2)\). This proves the stronger bound \(\Omega(a)\leq 2\) for all \(n\) with \(n^2\leq 10^{31}\), leaving only the range \(n^2>10^{31}\) for the analytic argument.

For the remaining range, the large-\(n\) analysis follows the explicit framework of Johnston, Sorenson, Thomas, and Webster~\cite{johnston2026primesprimescubes} closely. We include many of the underlying lemmas to keep the exposition self-contained, and we supply full proofs where modifications are required in the square-interval setting. For convenience, we describe these modifications using notation introduced later in the paper. All such notation is defined at the relevant point in the argument. Where practical, we also retain the notation of Johnston, Sorenson, Thomas, and Webster to facilitate comparison with the cube-interval paper. Specifically, the transition from cubes to squares requires the following changes:
\begin{enumerate}[label=\roman*.]
    \item Lemma~\ref{lem:small_n}: the finite verification just described, which covers all \(n\) with \(n^2 \leq 10^{31}\) and thereby reduces the analytic argument to the range \(n^2 > 10^{31}\).
    \item Lemma~\ref{lem:mertens_explicit_bounds}: an explicit admissible choice of the parameter \(\varepsilon\) for the range of \(z\) arising from our final parameter choices, enabling sharper numerical constants in the final optimisation step.
    \item Proposition~\ref{prop:weighted_sifting_bound}: an explicit upper bound for the weighted sifting sum in \eqref{eq:Sap_sum}, tailored to the square-interval set \(\mathcal A(N)=\mathbb Z\cap (N,N+2\sqrt N)\), where \(N=n^2\) in the notation above. The sieve decomposition follows the cube-interval argument, but the shorter square intervals change the natural scale: the size of \(\mathcal A(N)\) is on the order of \(\sqrt N\). Consequently, in the final optimisation step, we bound all contributions by explicit multiples of \(\sqrt N/\log X\) rather than \(N^{2/3}/\log X\). The exponent \(2/3\) arising in the cube setting is therefore replaced by the exponent \(1/2\) throughout; it appears in the admissible range for \(\alpha\), in the cutoff \(w\), and in the definition of \(D_p\). We also replace \(3N^{2/3}\) by \(2\sqrt N\), reflecting the change in interval length in the squares setting. Finally, we track a slightly sharper bound in the \(M_1(X)\) contribution than the one recorded in \cite{johnston2026primesprimescubes}, since this is numerically relevant for our final choice of parameters.
    \item Lemma~\ref{lem:Q0_omega3}: an explicit verification of the square-factor condition \eqref{eq:Q0_cond} for our square-interval set \(\mathcal A(N)\), giving concrete constants \(c\) and \(\delta\) for the parameter choices used in the proof. This controls the loss in passing from \(\mathcal A\) to \(\mathcal A'\) in \eqref{eq:A_prime_def} by bounding \(\sum_{z\le p<y}|\mathcal A_{p^2}|\) via the tail estimate \(\sum_{p\ge z}p^{-2}\) and an explicit upper bound for \(\pi(y)\).
    \item Theorem~\ref{thm:main_omega3}: the final optimisation step, where we choose parameters specific to the square setting to deduce the main result.
\end{enumerate}

A natural next goal would be to replace the bound \(\Omega(a)\leq 3\) by \(\Omega(a)\leq 2\). This is beyond the present framework. Weighted sieves are subject to a natural limitation in this direction, illustrated by the section ``An Extremal Example for the Weighted Sieve'' in Chapter~5 of Greaves~\cite{greaves2001sieves}. Results producing \(2\)-almost primes in related problems have often required more flexible sieve input, such as Iwaniec's form of the error term in the linear sieve~\cite{iwaniec1980new}, which permits the incorporation of bilinear estimates. We are not aware of an explicit version of this form of the linear sieve that is suitable for the present problem. Even with such an explicit version, obtaining an \(\Omega(a)\leq 2\) result for all \(n\geq 1\) would require substantial further optimisation beyond the methods used here.

The paper is organised as follows. Section~\ref{sec:prelims} establishes sieve notation and explicit sieve bounds, and includes the computational verification for small \(n\) needed for Theorem~\ref{thm:main_omega3}. Section~\ref{sec:Richertsect} sets up Richert's weighted sieve in the present context. Section~\ref{sec:mainsect} optimises the parameters and completes the proof of Theorem~\ref{thm:main_omega3}.

\section{Preliminaries of sieving}\label{sec:prelims}

We begin by setting up the sieve notation used throughout. Let \(\mathcal{A}\) be a finite set of integers and let \(\mathcal{P}\) be an infinite set of primes (in our application, \(\mathcal{P}\) is the set of all primes). Given \(z\geq 2\), the basic aim is to sieve (``sift'' out) \(\mathcal{A}\) by removing those elements divisible by at least one prime \(p\in\mathcal{P}\) with \(p<z\). In our application we consider the short interval
\begin{equation}\label{Adef}
    \mathcal{A}=\mathcal{A}(N):=\mathbb{Z}\cap (N,\,N+2\sqrt{N}).
\end{equation}

If we set \(N=n^2\), then \(N+2\sqrt{N}=(n+1)^2-1\), and hence \(\mathcal{A}(n^2)\subset (n^2,(n+1)^2)\). Therefore, to prove Theorem~\ref{thm:main_omega3} it suffices to show that for each \(N=n^2\) there exists \(a\in\mathcal{A}(N)\) with \(\Omega(a)\leq 3\).

For our purposes, we may assume \(N \geq 10^{31}\), since the remaining range can be handled by computation. The following lemma strengthens the small-\(n\) verification appearing in Dudek and Johnston~\cite[Lemma~2.1]{dudek_johnston2026apba} by proving that for every integer \(n\geq 1\) with \(n^2\leq 10^{31}\), the interval \((n^2,(n+1)^2)\) contains an integer with at most two prime factors. Sorenson and Webster verified that for every \(n\leq 7.05\cdot 10^{13}\), the interval \((n^2,(n+1)^2)\) contains a prime \cite{sorenson_webster_2025_legendre}. To extend the verification further, we also use record computations on \emph{maximal prime gaps}\footnote{Let \(p_n\) denote the \(n\)th prime and write \(g_n:=p_{n+1}-p_n\). We call \(g_n\) a \emph{maximal prime gap} if \(g_m<g_n\) for all \(m<n\).}. The current record computations imply that every gap between consecutive primes below \(6.8\cdot 10^{19}\) has size at most \(1724\) \cite{pzktupel_risinggap}. Combining these inputs yields the following lemma.

\begin{lemma}\label{lem:small_n}
    For every integer \(n\geq 1\) with \(n^2\leq 10^{31}\), there exists \(a \in (n^2,(n+1)^2)\) such that \(\Omega(a) \leq 2\).
\end{lemma}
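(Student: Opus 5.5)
We split the range of \(n\) into two parts. For \(n\leq 7.05\cdot 10^{13}\) there is nothing to do: Sorenson and Webster~\cite{sorenson_webster_2025_legendre} verified that \((n^2,(n+1)^2)\) contains a prime \(a\), and then \(\Omega(a)=1\). So the work is the range
\[
7.05\cdot 10^{13}< n\leq \sqrt{10^{31}}\approx 3.16\cdot 10^{15}.
\]
There we cannot guarantee a prime. Instead we build a semiprime \(a=pq\) in the interval by fixing a small prime \(p\) and finding a prime \(q\) in the short interval
\[
\bigl(n^2/p,\ (n+1)^2/p\bigr),
\]
whose length is \((2n+1)/p\).

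The primes \(q\) must lie below \(6.8\cdot 10^{19}\), so that the prime-gap data of~\cite{pzktupel_risinggap} apply. That data says consecutive primes below this bound differ by at most \(1724\). It therefore suffices to choose \(p\) so that two conditions hold:
\[
\frac{(n+1)^2}{p}\leq 6.8\cdot 10^{19}
\qquad\text{and}\qquad
\frac{2n+1}{p}>1724.
\]
The first condition needs \(p\geq 10^{31}/(6.8\cdot 10^{19})\approx 1.47\cdot 10^{11}\), up to the negligible correction from \((n+1)^2\) versus \(n^2\). The second needs \(p<(2n+1)/1724\), and for \(n\) near the top of the range \((2n+1)/1724\approx 3.67\cdot 10^{12}\). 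Both conditions are satisfiable, since \(1.47\cdot 10^{11}<3.67\cdot 10^{12}\).

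For smaller \(n\) the admissible window for \(p\) changes, so the choice must depend on \(n\). Write \(L=n^2/(6.8\cdot 10^{19})\) and \(U=(2n+1)/1724\); we need a prime \(p\in[L,U)\) (after adjusting \(L\) slightly to account for \((n+1)^2\)). Both constraints are monotone in \(n\): \(L\) grows like \(n^2\) and \(U\) like \(n\). The ratio \(U/L\) is therefore smallest at the top of the range, where it is still about \(25\). At the bottom, \(n\approx 7\cdot 10^{13}\), we have \(L\approx 72\) and \(U\approx 8\cdot 10^{10}\), so the window is enormous. Since \(U/L\geq 25>2\) throughout, Bertrand's postulate gives a prime \(p\in[L,U)\) for every \(n\) in the range. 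No case split is needed, though one could simply take a fixed \(p\) on a few dyadic blocks of \(n\).

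With such a \(p\) chosen, every \(q\) we need lies below \(6.8\cdot 10^{19}\), and the open interval \((n^2/p,(n+1)^2/p)\) has length greater than \(1724\). The gap bound then forces this interval to contain a prime \(q\). Consequently \(a=pq\in(n^2,(n+1)^2)\) and \(\Omega(a)=2\).

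The main obstacle is bookkeeping at the endpoints: we must check that the gap bound applies to the interval, not just to gaps between primes. Concretely, we need a prime below \(n^2/p\) and the upper endpoint \((n+1)^2/p\) to be at most \(6.8\cdot 10^{19}\). The first is immediate because \(n^2/p\geq 2\). The second requires taking \(L\) using \((n+1)^2\) rather than \(n^2\). The gap data should be read in its precise form, namely \(p_{k+1}-p_k\leq 1724\) whenever \(p_{k+1}\leq 6.8\cdot 10^{19}\). The generous factor of \(25\) in the window ratio absorbs all such off-by-one effects.
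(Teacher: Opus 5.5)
Your proof is correct and follows the paper's route: Sorenson--Webster for $n\le 7.05\cdot 10^{13}$, then a semiprime $pq$ with $q$ supplied by the gap bound $1724$ below $6.8\cdot 10^{19}$. The only difference is that you choose $p=p(n)$ by Bertrand's postulate in a window $[L,U)$ of ratio at least about $25$, whereas the paper fixes $p=14706000011$ for $n^2<10^{30}$ and $p=147058823551$ for $10^{30}\le n^2\le 10^{31}$, which is exactly your ``few blocks'' alternative.

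There are two harmless slips. First, at $n\approx 7.05\cdot 10^{13}$ one has $L\approx 7.3\cdot 10^{7}$, not $72$. Second, the paper uses the gap data in the form ``every $(x,x+1724]$ with $x<6.8\cdot 10^{19}$ contains a prime'', and under that form your endpoint check is immediate. Under your more cautious reading ($p_{k+1}\le 6.8\cdot 10^{19}$), the gap straddling $6.8\cdot 10^{19}$ is not covered. You would then need to shrink the bound by a constant factor (e.g.\ require $(n+1)^2/p\le \tfrac56\cdot 6.8\cdot 10^{19}$ and use a prime in $(x,6x/5)$), which your slack of $25$ easily allows.
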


\begin{proof}
    Let \(n\geq 1\) with \(n^2\leq 10^{31}\). If \(n\leq 7.05\cdot 10^{13}\), then Sorenson and Webster verified that \((n^2,(n+1)^2)\) contains a prime in this range \cite{sorenson_webster_2025_legendre}.
    
    Now assume \(n>7.05\cdot 10^{13}\). To cover this range, we construct a semiprime \(pq\in(n^2,(n+1)^2)\), where \(p\) and \(q\) are prime. The role of \(p\) is to ensure that the rescaled interval for \(q\) has length exceeding the known prime-gap bound while remaining below \(6.8\cdot 10^{19}\), so that the current record computations on maximal prime gaps may be applied. Indeed, by \cite{pzktupel_risinggap}, every gap between consecutive primes with starting prime below \(6.8\cdot 10^{19}\) has size at most \(1724\). Equivalently, for every positive real \(x<6.8\cdot 10^{19}\), there exists a prime \(q\in(x,x+1724]\).
    
    \smallskip
    \noindent\textbf{Case 1: \(n^2<10^{30}\).}
    Let \(p = 14706000011\) (which is prime), and consider the interval
    \[
    I:=\left(\frac{n^2}{p},\,\frac{(n+1)^2}{p}\right).
    \]
    Its length is
    \[
    |I|=\frac{(n+1)^2-n^2}{p}=\frac{2n+1}{p}
    \geq \frac{2\cdot 7.05\cdot 10^{13}+1}{14706000011}>1724.
    \]
    Also, since \(n<10^{15}\) in this case,
    \[
    \frac{(n+1)^2}{p}\leq \frac{(10^{15}+1)^2}{14706000011}
    < 6.8\cdot 10^{19}.
    \]
    Therefore the prime-gap bound applies at \(x=n^2/p\), and since \(|I|>1724\) there exists a prime \(q\in I\). Thus, we have
    \[
    n^2<pq<(n+1)^2.
    \]
    
    \smallskip
    \noindent\textbf{Case 2: \(10^{30}\leq n^2\leq 10^{31}\).}
    Let \(p = 147058823551\) (which is prime), and again consider
    \[
    I:=\left(\frac{n^2}{p},\,\frac{(n+1)^2}{p}\right).
    \]
    Now \(n\geq 10^{15}\), so
    \[
    |I|=\frac{2n+1}{p} \geq \frac{2\cdot 10^{15}+1}{147058823551} > 1724.
    \]
    Moreover,
    \[
    \frac{(n+1)^2}{p}
    \leq \frac{10^{31}+2\sqrt{10^{31}}+1}{147058823551}
    < 6.8\cdot 10^{19}.
    \]
    Therefore the prime-gap bound again applies at \(x=n^2/p\), and since \(|I|>1724\) there exists a prime \(q\in I\). Thus, we have
    \[
    n^2<pq<(n+1)^2.
    \]
    
    Combining the cases proves the lemma.
\end{proof}

The argument in Lemma~\ref{lem:small_n} can be extended beyond the stated range by choosing an appropriate prime \(p\) so that the corresponding interval for \(q\) stays within the range covered by the known prime-gap bound, while still having length exceeding that bound. We do not pursue this here, since the range \(n^2 \leq 10^{31}\) is sufficient for our purposes.

We now return to the sieve notation required for the analytic part of the argument. Given \(z\geq 2\), we quantify the set of remaining elements of \(\mathcal{A}\) after \(\mathcal{A}\) has been sifted by primes less than \(z\) as
\begin{equation}\label{sfunctiondef}
S(\mathcal{A},\mathcal{P},z)
:= \left| \mathcal{A} \setminus \bigcup_{\substack{p \in \mathcal{P} \\ p < z}} \mathcal{A}_p \right|,
\end{equation}
where
\begin{equation}\label{pzdef}
\mathcal{A}_p := \{a\in\mathcal{A} : p\mid a\},
\quad \text{and} \quad
P(z) := \prod_{\substack{p\in\mathcal{P}\\ p<z}} p.
\end{equation}
Thus \(S(\mathcal{A},\mathcal{P},z)\) counts the elements of \(\mathcal{A}\) free of prime divisors \(p\in\mathcal{P}\) with \(p<z\).

\subsection{An explicit linear sieve}

To detect integers in \(\mathcal{A}\) with few prime factors, we require effective upper and lower bounds for \(S(\mathcal{A},\mathcal{P},z)\). For this purpose we invoke the explicit linear sieve of Bordignon, Johnston, and Starichkova~\cite{bordignon_johnston_starichkova2025}. Rather than quoting their full general statement, we record below a streamlined form sufficient for our applications, following the version used in~\cite{dudek_johnston2026apba,johnston2026primesprimescubes}. Throughout this paper, \(\gamma\) denotes the Euler--Mascheroni constant.
\begin{lemma}[{Explicit version of the linear sieve \cite[Theorem~2.1]{bordignon_johnston_starichkova2025}}]\label{lem:linear_sieve}
    Let \(\mathcal{A}\) be a finite set of positive integers and let \(\mathcal{P}\) be a set of primes such that no element of \(\mathcal{A}\) is divisible by any prime outside \(\mathcal{P}\).
    Let \(P(z)\) and \(S(\mathcal{A},\mathcal{P},z)\) be defined as in \eqref{pzdef} and \eqref{sfunctiondef}.
    For each square-free integer \(d\), let \(g(d)\) be a multiplicative function satisfying \(0\leq g(p)<1\) for all \(p\in\mathcal{P}\), and define
    \[
    \mathcal{A}_d:=\{a\in\mathcal{A}: d\mid a\},\qquad
    r(d):=|\mathcal{A}_d|-|\mathcal{A}|\,g(d).
    \]
    Let \(\mathcal{Q}\subset\mathcal{P}\) and write \(Q:=\prod_{q\in\mathcal{Q}}q\).
    Assume there exists \(\varepsilon\) with \(0<\varepsilon\leq 1/74\) such that, for all \(1<u<z\),
    \begin{equation}\label{eq:eps_condition}
    \prod_{\substack{p\in\mathcal{P}\setminus\mathcal{Q}\\ u\leq p<z}}(1-g(p))^{-1}
    <(1+\varepsilon)\frac{\log z}{\log u}.
    \end{equation}
    Then for any \(D\geq z\) we have the upper bound
    \begin{equation}\label{eq:sieve_upper}
    S(\mathcal{A},\mathcal{P},z)
    <|\mathcal{A}|\,V(z)\left(F(s)+\varepsilon C_1(\varepsilon)e^2h(s)\right)+R(D),
    \end{equation}
    and for any \(D\geq z^2\) we have the lower bound
    \begin{equation}\label{eq:sieve_lower}
    S(\mathcal{A},\mathcal{P},z)
    >|\mathcal{A}|\,V(z)\left(f(s)-\varepsilon C_2(\varepsilon)e^2h(s)\right)-R(D),
    \end{equation}
    where
    \[
    s:=\frac{\log D}{\log z},
    \]
    \begin{equation}\label{eq:h_def}
    h(s):=\begin{cases}
    s^{-1}e^{-2}, &0< s\leq 1,\\
    e^{-2}, &1\leq s\leq 2,\\
    e^{-s}, &2\leq s\leq 3,\\
    3s^{-1}e^{-s}, &s\geq 3,
    \end{cases}
    \end{equation}
    and \(F(s)\), \(f(s)\) are defined by the delay differential system
    \begin{equation}\label{eq:Ff_def}
    \begin{cases}
    F(s)=\dfrac{2e^{\gamma}}{s},\quad f(s)=0 & \text{for } 0<s\leq 2,\\[0.6em]
    (sF(s))'=f(s-1),\quad (sf(s))'=F(s-1) & \text{for } s\geq 2.
    \end{cases}
    \end{equation}
    Moreover,
    \[
    V(z):=\prod_{\substack{p\in\mathcal{P}\\ p<z}}(1-g(p)),
    \qquad
    R(D):=\sum_{\substack{d\mid P(z)\\ d<QD}}|r(d)|,
    \]
    and \(C_1(\varepsilon)\), \(C_2(\varepsilon)\) are as in \cite[Table~1]{bordignon_johnston_starichkova2025}.
\end{lemma}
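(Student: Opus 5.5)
This lemma is quoted from Bordignon, Johnston, and Starichkova, so the plan is to obtain it as a specialisation of their \cite[Theorem~2.1]{bordignon_johnston_starichkova2025} rather than to rebuild the linear sieve. Concretely, I will take their general statement and match hypotheses and conclusions one at a time with the streamlined version above.

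\textbf{Step 1: hypotheses.} The setup should line up directly. This covers the finite set \(\mathcal A\) supported on primes in \(\mathcal P\), the multiplicative density \(g\) with \(0\le g(p)<1\), and the remainders \(r(d)=|\mathcal A_d|-|\mathcal A|g(d)\). It also covers the exceptional set \(\mathcal Q\) with product \(Q\). The dimension condition \eqref{eq:eps_condition} with \(0<\varepsilon\le 1/74\) is exactly their Iwaniec-type ``\(\kappa=1\)'' condition with multiplicative slack \(1+\varepsilon\). Primes in \(\mathcal Q\) are omitted from the product, which is why the remainder sum runs over \(d<QD\) rather than \(d<D\).

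\textbf{Step 2: upper bound.} Their theorem gives \(S(\mathcal A,\mathcal P,z)\) in terms of \(|\mathcal A|V(z)\) times \(F(s)\) plus an \(\varepsilon\)-dependent correction, with the level \(D\ge z\). I will write their correction in the form \(\varepsilon C_1(\varepsilon)e^2h(s)\). Here \(h\) is the piecewise function \eqref{eq:h_def}, whose pieces correspond to the ranges \(s\le1\), \(1\le s\le2\), \(2\le s\le3\) and \(s\ge3\) treated separately there. The constant \(C_1(\varepsilon)\) is read off from their Table~1.

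\textbf{Step 3: lower bound.} This is the same matching with \(f(s)\), \(C_2(\varepsilon)\), and the constraint \(D\ge z^2\). That constraint is the natural one, since \(f\) vanishes for \(s\le 2\) and the bound is trivial there.

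\textbf{Step 4: the functions \(F,f\).} I will check that \(F\) and \(f\) are the standard Jurkat--Richert functions. They are determined by \(sF(s)=2e^\gamma\) for \(s\le 3\) and \(f(s)=0\) for \(s\le2\), together with the delay differential equations \eqref{eq:Ff_def}. This agrees with the continuity convention on \([2,3]\).

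\textbf{Main obstacle.} The only real difficulty is bookkeeping: confirming that the normalisations agree, including the factor \(e^2\) versus their \(h\) and strict versus non-strict inequalities. If their \(h\) is stated with a different normalisation, I would absorb the constant into \(C_i(\varepsilon)\). No new mathematics is required, and the statement follows as used in \cite{dudek_johnston2026apba,johnston2026primesprimescubes}.
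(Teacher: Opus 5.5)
Your plan matches the paper, which gives no separate argument and simply quotes the lemma as a streamlined form of \cite[Theorem~2.1]{bordignon_johnston_starichkova2025}. One correction: the branch \(h(s)=s^{-1}e^{-2}\) for \(0<s\le 1\) is not treated in that theorem but is the paper's own notational extension (following \cite{johnston2026primesprimescubes}). This is harmless, because \(D\ge z\) forces \(s\ge 1\), and at \(s=1\) the two branches agree.
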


The definition of \(h(s)\) in \eqref{eq:h_def} for \(0 < s \leq 1\), namely \(h(s)=s^{-1}e^{-2}\), is a convenient extension of the notation used in \cite[Theorem~2.1]{bordignon_johnston_starichkova2025}. This choice is not part of the original statement, but it is useful for our application because we will require a weak upper-bound estimate for \(S(\mathcal{A},\mathcal{P},z)\) in the case \(D<z\) (equivalently, \(s < 1\)). We follow the same convention as in \cite{johnston2026primesprimescubes}.

For later use, we record the explicit formulas for \(F(s)\) and \(f(s)\) obtained from the delay differential system \eqref{eq:Ff_def} in the range relevant to our parameter choices. Although more complicated expressions are available for larger values of \(s\) (see, for example, \cite{cai2008chen2}), the following formulas are sufficient for our purposes.

\begin{lemma}\label{lem:Ff_explicit}
    Let \(F(s)\) and \(f(s)\) be defined by \eqref{eq:Ff_def}. Then
    \[
    F(s)=\frac{2e^\gamma}{s}\qquad (0<s\leq 3),
    \]
    and
    \[
    f(s)=\frac{2e^\gamma\log(s-1)}{s}\qquad (2\leq s \leq 4).
    \]
\end{lemma}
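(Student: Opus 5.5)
We solve the delay differential system \eqref{eq:Ff_def} explicitly, one unit interval at a time, starting from the initial data on \(0<s\leq 2\).

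\textbf{Step 1: \(F\) on \(2\leq s\leq 3\).} For \(2\leq s\leq 3\) we have \(s-1\in[1,2]\), so \(f(s-1)=0\) by the initial conditions. The equation \((sF(s))'=f(s-1)=0\) then shows that \(sF(s)\) is constant on \([2,3]\). We take \(F\) to be continuous at \(s=2\), which is the standard convention for the linear sieve functions. The constant is then \(2F(2)=2e^\gamma\), and so \(F(s)=2e^\gamma/s\) on \([2,3]\). Together with the initial data, this gives the formula for \(F\) on all of \(0<s\leq 3\).

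\textbf{Step 2: \(f\) on \(2\leq s\leq 4\).} For \(2\leq s\leq 4\) we have \(s-1\in[1,3]\subset(0,3]\), so by Step 1, \(F(s-1)=2e^\gamma/(s-1)\). Integrating \((tf(t))'=F(t-1)\) from \(2\) to \(s\), and using the initial value \(2f(2)=0\) (again by continuity), gives
\[
sf(s)=\int_2^s \frac{2e^\gamma}{t-1}\,dt = 2e^\gamma\log(s-1).
\]
Dividing by \(s\) yields \(f(s)=2e^\gamma\log(s-1)/s\) on \([2,4]\).

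\textbf{The only delicate point} is the interpretation of the system. The initial conditions and the differential equations overlap at \(s=2\), and this overlap is consistent. The derivative relations are to be read in the usual sense: \(sF(s)\) and \(sf(s)\) are continuous functions whose derivatives equal the stated right-hand sides, with one-sided derivatives at the breakpoints \(s=2,3\). With that convention both computations reduce to elementary integration.

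It is worth noting why Step 2 reaches further than Step 1. We did not need to compute \(F\) on \([3,4]\), because the delay in \((sf(s))'=F(s-1)\) only requires \(F\) on \([1,3]\). This is exactly why the formula for \(f\) holds up to \(s=4\), while the formula for \(F\) holds only up to \(s=3\).
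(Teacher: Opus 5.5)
Your proposal is correct: integrating \((sF(s))'=f(s-1)\) on \([2,3]\) and \((sf(s))'=F(s-1)\) on \([2,4]\), starting from the initial data at \(s=2\), is exactly how these formulas follow. The paper states the lemma without a separate proof and treats it as a direct consequence of the delay system \eqref{eq:Ff_def}, so your argument is the same (implicit) approach written out.
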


To keep the remainder term \(R(D)\) small, we choose \(g(d)\) so that \(|\mathcal{A}_d|\) is well-approximated by \(|\mathcal{A}|\,g(d)\). Since \(\mathcal{A}\) is a set of consecutive integers, we take \(g(d)=1/d\), which gives
\[
r(d)=|\mathcal{A}_d|-\frac{|\mathcal{A}|}{d}.
\]
With this choice,
\[
V(z)=\prod_{\substack{p\in\mathcal{P}\\ p<z}}\left(1-\frac{1}{p}\right).
\]
To estimate the main term in Lemma~\ref{lem:linear_sieve}, we require explicit bounds for \(V(z)\), and for this we use the following estimates.
\begin{lemma}[{\cite[Theorem~7]{rosser_schoenfeld1962}}]\label{lem:mertens_products}
    \begin{equation}\label{eq:mertens_lower}
    \frac{e^{-\gamma}}{\log z}\left(1-\frac{1}{2\log^2 z}\right) < \prod_{p<z}\left(1-\frac{1}{p}\right)
    \qquad (z\geq 285),
    \end{equation}
    and
    \begin{equation}\label{eq:mertens_upper}
    \prod_{p<z}\left(1-\frac{1}{p}\right) < \frac{e^{-\gamma}}{\log z}\left(1+\frac{1}{2\log^2 z}\right)
    \qquad (z>1).
    \end{equation}
\end{lemma}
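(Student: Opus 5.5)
This estimate is quoted from Rosser and Schoenfeld, and in the paper it is simply cited. If I had to reprove it, I would take the classical route through an explicit form of Mertens' theorem. Taking logarithms and expanding \(-\log(1-1/p)=\sum_{k\ge1}k^{-1}p^{-k}\) gives
\[
-\log\prod_{p<z}\Bigl(1-\frac1p\Bigr)=\sum_{p<z}\frac1p+\sum_{p<z}\Bigl(-\log\Bigl(1-\frac1p\Bigr)-\frac1p\Bigr).
\]
The second sum over all primes converges to \(\gamma-B\), where \(B\) is the Mertens constant; this is the classical identity relating \(B\), \(\gamma\) and the prime-power series. Its tail over \(p\ge z\) is \(O(1/z)\) and positive, and can be bounded explicitly by \(\sum_{n\ge z}1/(n(n-1))\le 1/(z-1)\).

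The first sum I would handle by partial summation against Chebyshev's function \(\theta(t)=\sum_{p\le t}\log p\). Writing \(\theta(t)=t+R(t)\), I would express \(\sum_{p<z}1/p\) as a Stieltjes integral of \(1/(t\log t)\) against \(d\theta(t)\). This gives
\[
\sum_{p<z}\frac1p=\log\log z+B+\frac{R(z)}{z\log z}-\int_z^\infty\frac{R(t)(1+\log t)}{t^2\log^2 t}\,dt,
\]
with \(B\) identified through the same formula. Inserting an explicit bound \(|R(t)|\le \eta\,t/\log t\), valid for \(t\ge z_0\), bounds the error by roughly \(\eta/\log^2 z\) plus lower-order terms. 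Hence \(\log\bigl(e^{\gamma}\log z\prod_{p<z}(1-1/p)\bigr)\) lies in \([-c/\log^2 z,\,c/\log^2 z]\) for a small constant \(c\). Exponentiating with \(e^{x}\le 1+x+x^2\) and \(e^{-x}\ge 1-x\) then gives the two-sided factor \(1\pm 1/(2\log^2 z)\), provided the constants can be arranged so that \(c\) together with the tail and \(x^2\) corrections stays below \(1/2\).

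The key obstacle is securing an explicit \(\theta\)-bound strong enough, with \(\eta\) well below \(1/2\), over the whole range \(z\ge z_0\). This requires the explicit zero-free region together with numerical verification of the low-lying zeros, exactly the analytic input Rosser and Schoenfeld develop. I would take such a bound as given from their paper, or from later improvements such as those of Dusart. The remaining finite range is a direct computation of the product against both bounds: \(285\le z<z_0\) for the lower bound, and \(1<z<z_0\) for the upper bound. For the lower bound, the computation must confirm that \(285\) is indeed where the inequality starts to hold for good. Since the product is piecewise constant in \(z\), it suffices to check just before and just after each prime.
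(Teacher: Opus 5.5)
The paper does not prove this lemma; it only cites Rosser--Schoenfeld, as you note. Your outline (partial summation against $\theta$, the identity for $\gamma-B$, then a finite check) is the route Rosser and Schoenfeld themselves take. However, the central quantitative step fails as written: it is off by a logarithm. With $|R(t)|\le\eta t/\log t$ the boundary term is indeed at most $\eta/\log^2 z$, but the integral term is not of lower order:
\[
\int_z^\infty \frac{|R(t)|(1+\log t)}{t^2\log^2 t}\,dt\le \eta\int_z^\infty\frac{1+\log t}{t\log^3 t}\,dt=\eta\Bigl(\frac{1}{\log z}+\frac{1}{2\log^2 z}\Bigr).
\]
Up to the positive tail, $\log\bigl(e^{\gamma}\log z\prod_{p<z}(1-1/p)\bigr)$ is minus the error in $\sum_{p<z}1/p$. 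That error is therefore of size $\eta/\log z$, which exceeds $1/(2\log^2 z)$ as soon as $\log z>1/(2\eta)$. No fixed $\eta$, however small, gives the factor $1\pm 1/(2\log^2 z)$ for all large $z$. So the obstacle you identify, namely $\eta$ well below $1/2$ in a bound of shape $t/\log t$, is the wrong target.

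What you need is a $\theta$-bound one logarithm stronger: $|\theta(t)-t|\le\eta t/\log^2 t$ for $t\ge z_0$. Then the boundary term is at most $\eta/\log^3 z$ and the integral at most $\eta\bigl(1/(2\log^2 z)+1/(3\log^3 z)\bigr)$, so the total error is $\eta/(2\log^2 z)+O(\eta/\log^3 z)$. The argument closes once $\eta$ is comfortably below $1$. That margin must absorb three things: the $\log^{-3}$ terms, the positive tail $T(z)\le 1/(z-1)$ (which only hurts the upper bound), and the loss in exponentiating.

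Bounds of this shape exist, for instance Dusart's $|\theta(x)-x|<0.2\,x/\log^2 x$ for $x\ge 3\,594\,641$. Rosser and Schoenfeld instead use a zero-free-region bound $|\theta(t)-t|<\epsilon(t)\,t$ with $\epsilon(t)$ of de la Vall\'ee Poussin type, which decays faster than any power of $1/\log t$, supplemented by numerically tabulated bounds in intermediate ranges. With that replacement, your finite check at the primes handles $z<z_0$, since both bounding functions are decreasing in $z$ on the relevant ranges.
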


To choose admissible values of \(\varepsilon\) in Lemma~\ref{lem:linear_sieve}, we require an explicit bound for the prime product in \eqref{eq:eps_condition} (a partial Mertens product). The following estimate is sufficient for the ranges relevant to our argument.

\begin{lemma}\label{lem:mertens_explicit_bounds}
    For all \(u\geq 3\) and \(z \geq 7080\), we have
    \begin{equation}\label{eq:partial_mertens_1}
    \prod_{u\leq p< z}\left(1-\frac{1}{p}\right)^{-1} < \left(1 + 1.13 \cdot 10^{-3}\right)\frac{\log z}{\log u}.
    \end{equation}
\end{lemma}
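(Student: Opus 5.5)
The plan is to write the partial product as a ratio of two full Mertens products and apply Lemma~\ref{lem:mertens_products} to each factor. For \(3\le u<z\) we have
\[
\prod_{u\leq p<z}\Bigl(1-\frac1p\Bigr)^{-1}=\frac{\prod_{p<u}(1-1/p)}{\prod_{p<z}(1-1/p)}.
\]
We bound the numerator from above by \eqref{eq:mertens_upper}, which holds for all \(u>1\), and the denominator from below by \eqref{eq:mertens_lower}, which holds since \(z\ge 7080>285\). This gives
\[
\prod_{u\leq p<z}\Bigl(1-\frac1p\Bigr)^{-1}<\frac{\log z}{\log u}\cdot\frac{1+\frac{1}{2\log^2 u}}{1-\frac{1}{2\log^2 z}}.
\]
If \(u\ge z\), the product is empty and equal to \(1\), while the right-hand side of \eqref{eq:partial_mertens_1} is at most \(1+1.13\cdot10^{-3}\) times a number that may be below \(1\). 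The lemma is only used for \(1<u<z\), as in \eqref{eq:eps_condition}, so I would restrict to \(u<z\), or note that case explicitly.

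The difficulty is that the factor \(1+1/(2\log^2u)\) is not close to \(1\) for small \(u\). At \(u=3\) it is about \(1.41\), far above \(1.00113\). So the estimate above is only useful once \(u\) is large. The denominator factor at \(z=7080\) is about \(1-1/(2\cdot 78.4)\approx 1.0064^{-1}\), which already exceeds the target. So even for large \(u\), Rosser--Schoenfeld alone will not give \(1.13\cdot10^{-3}\). I would therefore use sharper explicit Mertens-type bounds of the form \(\prod_{p<x}(1-1/p)=\frac{e^{-\gamma}}{\log x}(1+\theta(x))\), with \(|\theta(x)|\) very small for large \(x\). Examples are Dusart's bound \(|\theta|\le 0.2/\log^3 x\), or the bounds of Vanlalngaia and Axler valid from modest thresholds. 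With these, the ratio becomes \((1+\theta(u))/(1+\theta(z))\).

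I would then split into cases. For \(u\) below some threshold \(U_0\), and likewise \(z\) below some \(Z_0\), I would verify the inequality by direct computation. For each prime interval, the function \(\prod_{u\le p<z}(1-1/p)^{-1}\cdot\log u/\log z\) is monotone in \(u\) and \(z\) between consecutive primes. The worst case in \(u\) comes as \(u\) increases to just below a prime; the worst case in \(z\) comes as \(z\) decreases to just above a prime. So it suffices to check finitely many prime endpoints. Above the thresholds, the analytic error bounds give \((1+|\theta(u)|)/(1-|\theta(z)|)<1.00113\).

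The main obstacle is the mixed regime, where \(z\) is large but \(u\) is small. There \(\theta(u)\) is not small, but the ratio \(\log z/\log u\) does not help either. The key point is that \(\prod_{p<u}(1-1/p)\log u\, e^{\gamma}\) is actually less than \(1\) for all small \(u\), with the known sign \(\theta(u)<0\) for \(u\le 10^8\) or so. So the numerator contributes a factor below \(1\), and the constant is governed essentially by \(\sup_{z\ge7080}(1+\theta(z))^{-1}\) together with \(\sup_u(1+\theta(u))\). I expect the constant \(1.13\cdot 10^{-3}\) to be attained near \(z\approx 7080\) in the denominator, combined with the maximum of \(1+\theta(u)\) over \(u\ge3\). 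Both quantities would be computed numerically up to the threshold and bounded analytically beyond it.
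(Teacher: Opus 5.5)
Your plan is sound, but it takes a different route from the paper. The paper does no analysis here. It verifies the range \(7080\le z<10^5\), \(3\le u\le z\) with Johnston's script. For \(z\ge 10^5\) it simply quotes the stronger constant \(2.8\cdot 10^{-4}\) from \cite[Lemma~3.4]{johnston2026primesprimescubes}.

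You instead derive everything from the identity
\[
\prod_{u\le p<z}\Bigl(1-\frac1p\Bigr)^{-1}=\frac{\log z}{\log u}\cdot\frac{1+\theta(u)}{1+\theta(z)},
\]
with your \(\theta\) defined by \(\prod_{p<x}(1-1/p)=e^{-\gamma}(1+\theta(x))/\log x\). You combine three ingredients:
\begin{enumerate}
\item the known sign \(\theta(u)\le 0\) for moderate \(u\);
\item a two-sided explicit Mertens estimate for large arguments;
\item a finite check, reduced to prime endpoints by your monotonicity observation, which is correct.
\end{enumerate}
This buys independence from the cited cube-paper lemma. It also shows what the constant measures: how far \(1+\theta(z)\) can dip below \(\max_{3\le u\le z}(1+\theta(u))\) for \(z\ge 7080\).

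The price is a much longer computation. The lower half of Dusart's \(0.2/\log^3 x\) estimate is, to my knowledge, only available from about \(2.3\cdot 10^{6}\) on. So your direct check would have to run that far rather than to \(10^5\). The \(p\le x\) versus \(p<x\) discrepancy in such bounds also needs a line of bookkeeping.

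Be careful with the decoupled bound \(\sup_u(1+\theta(u))/\inf_{z}(1+\theta(z))\) in your last paragraph. It is only a safe substitute once \(z\) is large. Near \(z=7080\) the fluctuations of \(\theta\) are themselves of order \(10^{-3}\), so the decoupled quantity need not fit under \(1.13\cdot 10^{-3}\). In that range you should check the coupled quantity over \(3\le u\le z\) directly, as the paper's script does.

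Your remark about \(u>z\) is correct. Read literally, the lemma fails once \(\log u>(1+1.13\cdot 10^{-3})\log z\), because the product is then empty and equals \(1\). The paper's verification only covers \(u\le z\), and that is all that \eqref{eq:eps_condition} requires.
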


\begin{proof}
    We argue by ranges of \(z\).
    
    \smallskip
    \noindent\textbf{Case 1: \(7080\leq z<10^5\).}
    For this range we checked \eqref{eq:partial_mertens_1} for all \(3\leq u\leq z\) by direct computation, using Johnston's script~\cite{mertensbounds_github}; this is the same script used for the verification in~\cite[Lemma~3.4]{johnston2026primesprimescubes}.
    
    \smallskip
    \noindent\textbf{Case 2: \(z\geq 10^5\).}
    A stronger bound is proved in \cite[Lemma~3.4]{johnston2026primesprimescubes}, namely
    \[
    \prod_{u\leq p<z}\left(1-\frac{1}{p}\right)^{-1}
    <
    \left(1+2.8\cdot 10^{-4}\right)\frac{\log z}{\log u}
    \qquad (u\geq 3,\ z\geq 10^5).
    \]
    Since \(2.8\cdot 10^{-4}<1.13\cdot 10^{-3}\), this implies \eqref{eq:partial_mertens_1} for all \(u\geq 3\) and \(z\geq 10^5\).
    
    This completes the proof.
\end{proof}

In Lemma~\ref{lem:linear_sieve} we will take \(\mathcal Q=\{2\}\), so \(Q=2\). Then \eqref{eq:partial_mertens_1} verifies \eqref{eq:eps_condition} with \(\varepsilon=1.13\cdot 10^{-3}\) for \(z\geq 7080\).

\section{Richert's weighted sieve}\label{sec:Richertsect}

We now introduce Richert's logarithmic sieve weights, which will be used to obtain a lower bound for
\begin{equation}\label{eq:rk_def}
r_k(\mathcal{A}) := \left|\{a\in \mathcal{A} : \Omega(a)\leq k\}\right|.
\end{equation}
Throughout this section we work in generality, and the statements hold for an arbitrary finite set \(\mathcal A\), not only for the specific choice \(\mathcal A(N)\) from Section~\ref{sec:prelims}.

Let \(\mathcal P\) be a set of primes, and let \(P(z)\) be as in \eqref{pzdef}. Fix \(k\in\mathbb N\) as in \eqref{eq:rk_def}. Let \(k_1\) and \(k_2\) be real numbers with \(k_1\geq k_2\geq 1\) and \(k_2<k+1\). Write
\begin{equation}\label{eq:X_def}
X := \max(\mathcal{A}),
\end{equation}
\begin{equation}\label{eq:zy_def}
z := X^{1/k_1}, \quad y := X^{1/k_2}.
\end{equation}

For \(a\in\mathcal{A}\) with \((a,P(z))=1\), define Richert's logarithmic weight
\[
w(a)
:= \lambda-\sum_{\substack{p\in\mathcal{P}\\ z\leq p<y\\ p\mid a}}
\left(1-\frac{\log p}{\log y}\right),
\]
where
\begin{equation}\label{eq:lambda_def}
\lambda:=k+1-k_2.
\end{equation}
We then define the weighted sifting function
\begin{equation}\label{eq:W_def}
W(\mathcal{A},\mathcal{P},z)
=W(\mathcal{A},\mathcal{P},z,k_1,k_2,\lambda)
:=\sum_{\substack{a\in\mathcal{A}\\ (a,P(z))=1}} w(a).
\end{equation}

We first relate \(W(\mathcal A,\mathcal P,z)\) to \(r_k(\mathcal A)\). For this it is convenient to exclude elements with square factors in the intermediate prime range. Define
\begin{equation}\label{eq:A_prime_def}
\mathcal{A}'
:=
\mathcal{A}\setminus
\bigcup_{\substack{p\in\mathcal{P}\\ z\leq p<y}}
\mathcal{A}_{p^2}.
\end{equation}
Thus \(\mathcal{A}'\) consists of those elements of \(\mathcal{A}\) with no square divisor \(p^2\) coming from primes \(p\in\mathcal{P}\) with \(z\leq p<y\). In typical applications, the quantity
\[
\sum_{\substack{p\in\mathcal{P}\\ z\leq p<y}} |\mathcal{A}_{p^2}|
\]
is negligible compared with \(|\mathcal{A}|\).

The next lemma shows that a lower bound for \eqref{eq:W_def}, applied to \(\mathcal A'\), yields a lower bound for \(r_k(\mathcal A)\).

\begin{lemma}[{\cite[Lemma~3.5]{johnston2026primesprimescubes}}]\label{lem:W_to_rk}
    With \(r_k(\mathcal{A})\) as in \eqref{eq:rk_def}, \(W(\mathcal{A},\mathcal{P},z)\) as in \eqref{eq:W_def}, and \(\mathcal{A}'\) as in \eqref{eq:A_prime_def}, we have
    \[
    r_k(\mathcal{A}) \geq \frac{1}{k}\,W(\mathcal{A}',\mathcal{P},z).
    \]
\end{lemma}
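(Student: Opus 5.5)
The plan is to show that every element of \(\mathcal A'\) with \((a,P(z))=1\) and positive weight has \(\Omega(a)\le k\), and that every weight is at most \(k\). Then
\[
W(\mathcal A',\mathcal P,z)=\sum_{\substack{a\in\mathcal A'\\(a,P(z))=1}} w(a)\le \sum_{\substack{a\in\mathcal A'\\(a,P(z))=1,\ w(a)>0}} w(a)\le k\cdot\bigl|\{a\in\mathcal A:\Omega(a)\le k\}\bigr| = k\,r_k(\mathcal A),
\]
which is the claim. We use \(\mathcal A'\subseteq\mathcal A\) and drop the terms with \(w(a)\le 0\). As in the setting of Lemma~\ref{lem:linear_sieve}, we assume every prime factor of each \(a\) lies in \(\mathcal P\); in the application \(\mathcal P\) is all primes, and the elements of \(\mathcal A\) are positive integers.

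\textbf{Upper bound on the weight.} Each summand \(1-\log p/\log y\) is nonnegative because \(p<y\). Hence \(w(a)\le\lambda=k+1-k_2\le k\), using \(k_2\ge 1\).

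\textbf{Positive weight forces \(\Omega(a)\le k\).} Take \(a\in\mathcal A'\) with \((a,P(z))=1\) and \(w(a)>0\). Factor \(a=p_1\cdots p_r\) with multiplicity, where \(r=\Omega(a)\). Since \((a,P(z))=1\), every \(p_i\ge z\). Since \(a\in\mathcal A'\), the primes in \([z,y)\) occur only to the first power. Therefore the sum in \(w(a)\), which runs over distinct primes dividing \(a\), coincides with the sum over all \(p_i<y\) counted with multiplicity. This is the only place \(\mathcal A'\) is used.

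Now extend the summand to all \(i\) by setting \(\phi(p)=1-\log p/\log y\) for \(p<y\) and \(\phi(p)=0\) for \(p\ge y\). Then \(\phi(p)\ge 1-\log p/\log y\) for every \(p\), so
\[
\sum_{i}\phi(p_i)\ \ge\ r-\frac{\log a}{\log y}\ \ge\ r-\frac{\log X}{\log y}=r-k_2,
\]
using \(a\le X\) and \(y=X^{1/k_2}\). Combining with \(w(a)>0\) gives \(0<w(a)\le \lambda-(r-k_2)=k+1-r\), so \(r<k+1\), that is, \(\Omega(a)\le k\).

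The delicate step is this middle inequality: the extension of the summand to primes \(\ge y\) must go in the right direction, and the multiplicity bookkeeping relies on removing the \(p^2\) terms, which is exactly what \(\mathcal A'\) does. The rest is routine.
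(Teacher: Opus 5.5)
Your proof is correct. The paper gives no proof of its own here; it quotes the result from \cite[Lemma~3.5]{johnston2026primesprimescubes}. Your argument is the standard one for Richert's weights: you bound $w(a)\le\lambda\le k$, and you show via $\sum_{p\mid a,\,z\le p<y}(1-\log p/\log y)\ge\Omega(a)-\log X/\log y=\Omega(a)-k_2$ that $w(a)>0$ forces $\Omega(a)\le k$, with $\mathcal A'$ and $(a,P(z))=1$ handling the multiplicity bookkeeping. Your explicit hypothesis that every prime factor of each element of $\mathcal A$ lies in $\mathcal P$ is genuinely needed for general $\mathcal P$; without it, $\mathcal A=\{2^{11}\}$ with $2\notin\mathcal P$ is a counterexample. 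It holds in the application, where $\mathcal P$ is the set of all primes.
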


Next we express \(W(\mathcal A',\mathcal P,z)\) in terms of ordinary sifting functions, so that Lemma~\ref{lem:linear_sieve} may be applied. To control the loss incurred by passing from \(\mathcal{A}\) to \(\mathcal{A}'\), we assume the following condition.

\begin{equation}\tag{$Q_0(c,\delta)$}\label{eq:Q0_cond}
\sum_{\substack{p\in\mathcal{P}\\ z\leq p<y}} |\mathcal{A}_{p^2}|
\leq c\,|\mathcal{A}|^{1-\delta},
\end{equation}
for some constants \(c>0\) and \(0<\delta<1\). In particular, \eqref{eq:Q0_cond} implies that \(|\mathcal{A}\setminus\mathcal{A}'|\) is small.

\begin{lemma}[{\cite[Lemma~3.6]{johnston2026primesprimescubes}}]\label{lem:rk_lower_weighted}
    Assume \eqref{eq:Q0_cond}. Then
    \begin{align}\label{eq:rk_lower_main}
    r_k(\mathcal{A})
    \geq\;& \frac{\lambda}{k}\,S(\mathcal{A},\mathcal{P},z)
    -\frac{1}{k}\sum_{\substack{p\in\mathcal{P}\\ z\leq p<y}} \left(1-\frac{\log p}{\log y}\right)S(\mathcal{A}_p,\mathcal{P},z)
    -\frac{\lambda c}{k}\,|\mathcal{A}|^{1-\delta},
    \end{align}
    where \(\lambda\) is defined by \eqref{eq:lambda_def}.
\end{lemma}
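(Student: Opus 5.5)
The plan is to start from Lemma~\ref{lem:W_to_rk}, which gives \(r_k(\mathcal A)\ge \frac1k W(\mathcal A',\mathcal P,z)\). The task is then to bound \(W(\mathcal A',\mathcal P,z)\) from below by unweighted sifting functions of \(\mathcal A\) itself. First expand the weight. Since every \(a\in\mathcal A'\) with \((a,P(z))=1\) has no \(p^2\mid a\) for \(z\le p<y\), we have
\[
W(\mathcal A',\mathcal P,z)=\lambda\,S(\mathcal A',\mathcal P,z)-\sum_{\substack{p\in\mathcal P\\ z\le p<y}}\Bigl(1-\frac{\log p}{\log y}\Bigr)S(\mathcal A'_p,\mathcal P,z).
\]
This follows by interchanging the order of summation: each \(a\) counted in \(S(\mathcal A'_p,\mathcal P,z)\) is a sifted element divisible by \(p\), and it contributes the term \(1-\log p/\log y\) exactly once.

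Next, compare each piece with its counterpart for \(\mathcal A\). For the subtracted sum, \(\mathcal A'_p\subseteq\mathcal A_p\), so \(S(\mathcal A'_p,\mathcal P,z)\le S(\mathcal A_p,\mathcal P,z)\). The coefficients \(1-\log p/\log y\) are nonnegative for \(p<y\), so replacing \(\mathcal A'_p\) by \(\mathcal A_p\) only makes the subtracted term larger, and the inequality points the right way. For the main term, we have \(S(\mathcal A',\mathcal P,z)\ge S(\mathcal A,\mathcal P,z)-|\mathcal A\setminus\mathcal A'|\). By the union bound,
\[
|\mathcal A\setminus\mathcal A'|\le\sum_{z\le p<y}|\mathcal A_{p^2}|\le c|\mathcal A|^{1-\delta}
\]
by \eqref{eq:Q0_cond}. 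Since \(\lambda=k+1-k_2>0\) (because \(k_2<k+1\)), multiplying by \(\lambda\) preserves the direction of the inequality.

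Combining these bounds and dividing by \(k\) gives \eqref{eq:rk_lower_main} directly. The only delicate point is to make sure the sign conditions hold, namely \(\lambda>0\) and nonnegative coefficients, so that every replacement goes in the correct direction. Beyond that, the argument is bookkeeping. I do not expect any real obstacle, apart from checking carefully that the expansion of \(W\) over \(\mathcal A'\) is exact, which uses the squarefreeness in the range \([z,y)\).
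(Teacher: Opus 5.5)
Your proof is correct and follows the intended route. The paper quotes this lemma from Johnston--Sorenson--Thomas--Webster without proof, and its surrounding text describes exactly your steps: expand $W(\mathcal A',\mathcal P,z)$ into ordinary sifting functions, replace $S(\mathcal A'_p,\mathcal P,z)$ by the larger $S(\mathcal A_p,\mathcal P,z)$ using the nonnegative coefficients, and lose at most $\lambda c|\mathcal A|^{1-\delta}$ in the main term via \eqref{eq:Q0_cond} and $\lambda>0$. One small correction: the expansion of $W$ is an exact identity for any finite set, since $w(a)$ sums over the distinct primes $p\mid a$, so it does not use squarefreeness in $[z,y)$; that hypothesis is needed only inside Lemma~\ref{lem:W_to_rk}, to ensure $w(a)\le 0$ whenever $\Omega(a)>k$.
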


The right-hand side of \eqref{eq:rk_lower_main} is now suitable for lower bounds via Lemma~\ref{lem:linear_sieve}, applied to \(\mathcal A\) and to the sets \(\mathcal A_p\).

\subsection{An upper bound for the weighted sifting term}

By Lemma~\ref{lem:rk_lower_weighted}, it remains to obtain an explicit upper bound for the weighted sifting term
\begin{equation}\label{eq:Sap_sum}
\sum_{\substack{p\in\mathcal{P}\\ z\leq p<y}}
\left(1-\frac{\log p}{\log y}\right)S(\mathcal{A}_p,\mathcal{P},z).
\end{equation}
Bounding \eqref{eq:Sap_sum} is the main task of the remainder of this section. We begin by recording two auxiliary lemmas from the literature.

\begin{lemma}[{\cite[\S1.3.5, Lemma~1(ii)]{greaves2001sieves}}]\label{lem:partial_summation_upper}
    Let \(f(t)\) be a positive monotone function on \([z,y]\), with \(f'(t)\) piecewise continuous on \([z,y]\), and let \(c(n)\) be an arithmetic function. Suppose that
    \[
    \sum_{x \leq n<w} c(n) \leq g(w)-g(x)+E
    \]
    for some constant \(E\), whenever \(z\leq x<w\leq y\), where \(g\) is differentiable on \([z,y]\). Then
    \[
    \sum_{z\leq n<y} c(n)f(n)
    \leq
    \int_z^y f(t)g'(t)\,dt
    + E\max\{f(z),f(y)\}.
    \]
\end{lemma}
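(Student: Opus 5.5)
This is a standard partial summation (Abel summation) argument. Since \(f\) is monotone, positive, and has piecewise continuous derivative, I will write \(f(n)\) as an integral of \(f'\) against the indicator of \(t\le n\). The double sum then becomes an integral of partial sums of \(c\), to which the hypothesis applies.

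\textbf{The decreasing case.} Assume first that \(f\) is decreasing. For \(z\le n<y\) write
\[
f(n)=f(y)+\int_n^y(-f'(t))\,dt .
\]
Set \(C(x):=\sum_{z\le m<x}c(m)\). Interchanging the finite sum and the integral gives
\[
\sum_{z\le n<y}c(n)f(n)=f(y)\,C(y)+\int_z^y(-f'(t))\,C(t^{+})\,dt .
\]
Here \(C(t^{+})\) denotes the sum over \(z\le n\le t\); since it differs from \(C(t)\) only at integer points, it integrates the same. The hypothesis, with \(x=z\), gives \(C(t)\le g(t)-g(z)+E\) for every \(t\in(z,y]\). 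Since \(-f'\ge0\) and \(f(y)>0\), we may insert this bound in each term, obtaining
\[
\sum_{z\le n<y}c(n)f(n)\le f(y)\bigl(g(y)-g(z)+E\bigr)+\int_z^y(-f'(t))\bigl(g(t)-g(z)+E\bigr)\,dt .
\]
The \(E\)-terms add up to \(E\bigl(f(y)+f(z)-f(y)\bigr)=Ef(z)\). Integrating the \(g\)-terms by parts, they become \(\int_z^y f(t)g'(t)\,dt\). The boundary terms \(f(y)(g(y)-g(z))\) and \(-[f(t)(g(t)-g(z))]_z^y\) cancel exactly.

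\textbf{The increasing case.} This is symmetric. Write
\[
f(n)=f(z)+\int_z^n f'(t)\,dt
\]
and use tail sums \(T(x):=\sum_{x\le m<y}c(m)\le g(y)-g(x)+E\), which come from the hypothesis with \(w=y\). This time the \(E\)-contribution is \(E\bigl(f(z)+f(y)-f(z)\bigr)=Ef(y)\), and the main terms again integrate by parts to \(\int_z^y fg'\). In either case the error is at most \(E\max\{f(z),f(y)\}\).

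\textbf{Expected difficulties.} The only delicate points are the degenerate endpoints. The hypothesis requires \(x<w\), but at \(t=z\) (or \(t=y\)) the relevant sum is empty, so the bound there follows by continuity or from the empty sum being \(0\). If \(E<0\), we must check that this empty-sum case does not fail; if needed, this is handled by noting it affects only a measure-zero set of \(t\). Otherwise the argument is routine Stieltjes integration by parts, justified by the piecewise continuity of \(f'\) and the differentiability of \(g\).
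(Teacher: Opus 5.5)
Your proof is correct. It is the standard partial-summation argument behind the result the paper quotes from Greaves; the paper itself gives no proof and only cites that source. You use only the initial segments \(x=z\) when \(f\) is decreasing, which gives the error \(Ef(z)\), and only the tail segments \(w=y\) when \(f\) is increasing, which gives \(Ef(y)\). Your endpoint worry is moot, since applying the hypothesis to an integer-free interval \([x,w)\) and letting \(w\to x^{+}\) (using continuity of \(g\)) already forces \(E\ge 0\).
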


We also require an explicit estimate for reciprocal prime sums over intervals, which we quote in the following form.

\begin{lemma}[{\cite[Corollary~1]{vanlalngaia2017explicit}}]\label{lem:vanlalngaia_prime_reciprocals}
    For any real numbers \(b>a>1000\), we have
    \[
    \sum_{a\leq p<b}\frac{1}{p}
    <
    \log\log b-\log\log a+\frac{5}{(\log a)^3}.
    \]
\end{lemma}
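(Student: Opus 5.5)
The plan is to derive the bound by partial summation from an explicit form of the prime number theorem for Chebyshev's function \(\theta(t)=\sum_{p\leq t}\log p\). We will not pass through an explicit Mertens' second theorem. Two-sided bounds of the shape \(\sum_{p<x}1/p=\log\log x+B+O(1/\log^2 x)\) lose too much: taking the difference at \(a\) and \(b\) leaves an error of order \(1/\log^2 a\). That cannot be dominated by \(5/(\log a)^3\) once \(\log a>5\).

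\textbf{Setup.} Write \(\theta(t)=t+R(t)\). Take an explicit bound \(|R(t)|\leq \eta\, t/(\log t)^3\) valid for \(t\geq t_0\), of the type proved by Dusart. Then express the sum as a Stieltjes integral:
\[
\sum_{a\leq p<b}\frac1p=\int_{a^-}^{b^-}\frac{d\theta(t)}{t\log t}.
\]
The left-closed, right-open convention is handled by taking the endpoints as one-sided limits. Those limits only change boundary terms by amounts that can be absorbed, because \(R\) is bounded on both sides.

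\textbf{Splitting the integral.} The main part \(\int_a^b dt/(t\log t)\) gives exactly \(\log\log b-\log\log a\). The contribution of \(R\) is handled by integrating by parts, which produces two pieces.
\begin{itemize}
\item Boundary terms \(R(b)/(b\log b)-R(a)/(a\log a)\). These are at most \(2\eta/(\log a)^4\).
\item The integral \(\int_a^b R(t)\,\frac{1+\log t}{t^2\log^2 t}\,dt\). This is at most
\[
\eta\int_a^\infty\frac{1+\log t}{t(\log t)^5}\,dt=\eta\left(\frac{1}{3(\log a)^3}+\frac{1}{4(\log a)^4}\right).
\]
\end{itemize}
Altogether the error is at most
\[
\frac{\eta}{(\log a)^3}\left(\frac13+\frac{2.25}{\log a}\right).
\]
For \(a>1000\) the bracket is at most about \(0.66\). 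So it suffices to have \(\eta\leq 7\) or so on the relevant range, and a cruder one-sided bound suffices, since only an upper bound on the sum is needed.

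\textbf{Main obstacle: the low range.} The delicate step is having a valid \(\eta\) all the way down to \(t=1000\). Published bounds of the form \(|\theta(t)-t|<\eta t/\log^3 t\) with small \(\eta\) typically hold only for \(t\geq t_0\) with \(t_0\) larger than \(1000\). I would handle this in two steps.
\begin{itemize}
\item First, compute \(\max_{1000\leq t\leq t_0}|R(t)|(\log t)^3/t\) directly. Since \((\log t)^3\approx 300\) near \(t=1000\), the known estimate \(|\theta(t)-t|<2\sqrt t\) in this range already gives a small effective \(\eta\).
\item Second, use the published \(\eta\) for \(t\geq t_0\).
\end{itemize}
The error estimate above then applies with \(\eta\) the maximum of the two values. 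If that is still too tight near \(a\approx 1000\), the fallback is to verify \(\sum_{a\leq p<b}1/p-\log\log b+\log\log a<5/(\log a)^3\) by direct computation for \(a\) below some moderate bound, and to use the analytic argument only beyond it. This is the same division into computed and analytic ranges used elsewhere in the paper.
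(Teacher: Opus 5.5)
The paper gives no proof of this lemma; it imports it from Vanlalngaia. Your partial-summation identity, the integration by parts, and the resulting error bound $\frac{\eta}{(\log a)^3}\bigl(\frac13+\frac{2.25}{\log a}\bigr)$ are all correct. The genuine gap is the step meant to reach $a=1000$, which fails numerically.

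\begin{itemize}
\item The input $|\theta(t)-t|<2\sqrt t$ corresponds to $\eta(t)=2(\log t)^3/\sqrt t$. This is about $21$ at $t=1000$, so it is not small.
\item No uniform $\eta$ on $[1000,\infty)$ meets your requirement $\eta\lesssim 7.6$ either. As $t\to 1009^-$ one has $t-\theta(t)\to 1009-\theta(997)\approx 52.8$, which forces $\eta\ge 17$.
\end{itemize}

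So with absolute values the analytic argument closes only once $a$ exceeds roughly $10^5$, and the fallback does essential work. As stated, the fallback is incomplete: for fixed $a$ below your cutoff $A$ the inequality must hold for \emph{all} $b>a$, and no finite computation covers that. You would have to split at $A$, writing $E(a,b)=E(a,A)+E(A,b)$ with $E(a,b):=\sum_{a\le p<b}1/p-\log\log b+\log\log a$. You would then need to check that the computed $E(a,A)$, plus the positive analytic bound for $E(A,b)$, stays below $5/(\log a)^3$ for every real $a\in(1000,A]$.

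A cleaner repair is to keep signs rather than absolute values. Take a range $1000\le t\le T$ on which $\theta(t)<t$ is known (known at least for $T=10^{17}$).

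\begin{itemize}
\item On that range, the boundary term $R(b^-)/(b\log b)$ and the integral of $R(t)\frac{1+\log t}{t^2(\log t)^2}$ are both nonpositive. Hence for $b\le T$ the entire error is at most $\frac{a-\theta(a^-)}{a\log a}\le\frac{2}{\sqrt a\,\log a}$, using your $2\sqrt t$ input. Moreover $2(\log a)^2\le 3.02\sqrt a<5\sqrt a$ for all $a\ge 1000$, since $(\log a)^2/\sqrt a$ is decreasing there.
\item For $b>T$, the extra contribution from $t\ge T$ is at most $\eta_T\bigl(\frac13+\frac{1.25}{\log T}\bigr)/(\log T)^3\le 0.37\,\eta_T/(\log T)^3$. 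Here $\eta_T$ is any Dusart-type constant valid for $t\ge T$. This fits in the remaining budget against $5/(\log a)^3\ge 5/(\log T)^3$ for any modest $\eta_T$.
\item For $a>T$, your original absolute-value estimate applies directly.
\end{itemize}

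Separately, your stated reason for avoiding the Mertens route applies only to $(\log x)^{-2}$ error terms. A two-sided bound $\bigl|\sum_{p<x}1/p-\log\log x-B\bigr|\le 2.5/(\log x)^3$ for $x\ge 1000$ would give the lemma at once by differencing.
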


The next result packages the linear-sieve upper bound into the weighted form required by \eqref{eq:Sap_sum}. It is the square-interval analogue of the corresponding estimate in \cite[Proposition 3.9]{johnston2026primesprimescubes}. The only inputs are the upper-bound linear sieve (Lemma~\ref{lem:linear_sieve}), an explicit Mertens product bound (Lemma~\ref{lem:mertens_products}), and explicit control of weighted reciprocal prime sums via partial summation. The dependence on \(k_1,k_2,\alpha,\varepsilon\) and \(Q\) is left explicit for the optimisation in Section~\ref{sec:mainsect}.

\begin{proposition}\label{prop:weighted_sifting_bound}
    Let \(\mathcal A=\mathcal A(N)=\mathbb Z\cap (N,N+2\sqrt N)\) be as in \eqref{Adef}. Let \(\mathcal{P}\) denote the set of all primes, and let \(X,z,y\) be as in \eqref{eq:X_def} and \eqref{eq:zy_def}. Assume that \(y>z>1000\) and \(k_1\leq 8\). Choose a real parameter \(\alpha\) with
    \begin{equation}\label{eq:alpha_range}
    0<\alpha<\frac{1}{2}-\frac{1}{k_2},
    \end{equation}
    and put
    \begin{equation}\label{eq:kalpha_def}
    k_\alpha:=k_1\!\left(\frac12-\frac{1}{k_2}-\alpha\right).
    \end{equation}
    For each prime \(p\), define
    \begin{equation}\label{eq:Dp_def}
    D_p:=\frac{X^{\frac12-\alpha}}{p},
    \end{equation}
    \begin{equation}\label{eq:sp_def}
    s_p:=\frac{\log D_p}{\log z},
    \end{equation}
    and set
    \begin{equation}\label{eq:Dtilde_def}
    \widetilde D:=\min\{D_y,z\}.
    \end{equation}
    Let \(\mathcal{Q}\) be a set of primes, write \(Q:=\prod_{q\in\mathcal{Q}}q\), and let \(\varepsilon>0\) satisfy
    \begin{equation}\label{eq:eps_condition_prop}
    \prod_{\substack{\ell\in\mathcal{P}\setminus \mathcal{Q}\\ u\leq \ell<x}}
    \left(1-\frac{1}{\ell}\right)^{-1}
    <(1+\varepsilon)\frac{\log x}{\log u},
    \end{equation}
    for all \(x\geq \widetilde D\) and \(1<u\leq x\). Then
    \begin{equation}\label{eq:weighted_sum_prop}
    \sum_{\substack{p\in\mathcal{P}\\ z\leq p<y}}
    \left(1-\frac{\log p}{\log y}\right) S(\mathcal{A}_p,\mathcal{P},z)
    < k_1 e^{-\gamma}\!\left(1+\frac{1}{2(\log \widetilde D)^2}\right)
    \left(M_1(X)+M_2(X)\right)+\mathcal E(X),
    \end{equation}
    where
    \begin{align}
    M_1(X)
    :=\;& \frac{2\sqrt{N}}{\log X}\Biggl[
    \frac{2e^\gamma}{k_1}
    \Biggl(
    \frac{1}{1-2\alpha}
    \left(
    2 \log\frac{k_1}{k_2}
    +\left(k_2(1 - 2\alpha)-2\right)
    \log\frac{1- 2\alpha-\frac{2}{k_2}}{1-2\alpha-\frac{2}{k_1}}
    \right)
    \nonumber\\
    &\qquad\quad
    +\frac{5k_1^3}{\left(\frac{1}{2} - \alpha - \frac{1}{k_1}\right)(\log X)^3}\left(1-\frac{k_2}{k_1}\right)\Biggr)
    \nonumber\\
    &\qquad\qquad
    +\varepsilon C_1(\varepsilon)e^2 h(k_\alpha)
    \Biggl(
    \log\frac{k_1}{k_2}-1+\frac{k_2}{k_1}
    +\frac{5k_1^3}{(\log X)^3}\left(1-\frac{k_2}{k_1}\right)
    \Biggr)
    \Biggr], \label{eq:M1_def}
    \end{align}
    \begin{equation}\label{eq:M2_def}
    M_2(X)
    := \frac{y}{\log X}\left(1-\frac{k_2}{k_1}\right)
    \left(\frac{2e^\gamma}{k_\alpha}+\varepsilon C_1(\varepsilon)e^2 h(k_\alpha)\right),
    \end{equation}
    and
    \begin{equation}\label{eq:E_def}
    \mathcal E(X)
    := Q X^{\frac{1}{2} - \alpha}
    \left(
    \log\frac{k_1}{k_2}-1+\frac{k_2}{k_1}
    +\frac{5k_1^3}{(\log X)^3}\left(1-\frac{k_2}{k_1}\right)
    \right).
    \end{equation}
    Here \(C_1(\varepsilon)\) and \(h(s)\) are as in Lemma~\ref{lem:linear_sieve}.
\end{proposition}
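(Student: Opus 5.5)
Fix a prime $p$ with $z\le p<y$ and apply the upper-bound linear sieve (Lemma~\ref{lem:linear_sieve}) to $\mathcal A_p$. Take $g(d)=1/d$ and level $D=D_p=X^{1/2-\alpha}/p$. Since $p<y=X^{1/k_2}$ and $\alpha<\tfrac12-\tfrac1{k_2}$, we have $D_p>D_y>1$, and
\[
s_p=\frac{\log D_p}{\log z}=k_1\Bigl(\tfrac12-\alpha\Bigr)-\frac{k_1\log p}{\log X}\ge k_\alpha .
\]
If $D_p<z$ the lemma is still used in its extended form (the convention $h(s)=s^{-1}e^{-2}$ for $s\le1$, and $F(s)=2e^\gamma/s$), so the bound is uniform in $p$.

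Condition \eqref{eq:eps_condition} for sieving up to $z$ follows from \eqref{eq:eps_condition_prop}, because $z\ge\widetilde D$. Hence
\[
S(\mathcal A_p,\mathcal P,z)<|\mathcal A_p|V(z)\bigl(F(s_p)+\varepsilon C_1e^2h(s_p)\bigr)+R_p(D_p).
\]
We have $|\mathcal A_p|\le 2\sqrt N/p+1$, and Lemma~\ref{lem:mertens_products} gives
\[
V(z)<\frac{e^{-\gamma}k_1}{\log X}\Bigl(1+\frac{1}{2\log^2 z}\Bigr)\le\frac{e^{-\gamma}k_1}{\log X}\Bigl(1+\frac{1}{2(\log\widetilde D)^2}\Bigr).
\]
Since $h$ is nonincreasing, $h(s_p)\le h(k_\alpha)$. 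Lemma~\ref{lem:Ff_explicit} gives $F(s_p)=2e^\gamma/s_p$ as long as $s_p\le3$; this holds because $k_1\le8$ keeps $k_1(\tfrac12-\alpha)<4$, and a slightly larger $s_p$ could be handled separately if it arose. The contribution of the ``$+1$'' in $|\mathcal A_p|$ is summed over at most $\pi(y)\le y$ primes with weight at most $1-k_2/k_1$, using $F(s_p)\le 2e^\gamma/k_\alpha$. This produces exactly $M_2(X)$. The remainder $R_p(D_p)=\sum_{d<QD_p}|r_p(d)|$ has $|r_p(d)|\le1$, so it is at most $QD_p=QX^{1/2-\alpha}/p$. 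Summing against the weight $1-\log p/\log y$ gives $\mathcal E(X)$, once the weighted reciprocal prime sum below is bounded.

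The main step is the weighted prime sums
\[
\Sigma_1=\sum_{z\le p<y}\Bigl(1-\frac{\log p}{\log y}\Bigr)\frac{1}{p\,s_p}
\quad\text{and}\quad
\Sigma_2=\sum_{z\le p<y}\Bigl(1-\frac{\log p}{\log y}\Bigr)\frac1p .
\]
We bound both with Lemma~\ref{lem:partial_summation_upper}, taking $c(n)=1/n$ on primes, $g(t)=\log\log t$, and $E=5/(\log z)^3=5k_1^3/(\log X)^3$ from Lemma~\ref{lem:vanlalngaia_prime_reciprocals} (valid since $z>1000$).

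For $\Sigma_2$ take $f(t)=1-\log t/\log y$, which is decreasing with $\max f=f(z)=1-k_2/k_1$. Substituting $v=\log t/\log X$ turns the integral into $\int_{1/k_1}^{1/k_2}(1-k_2v)\,dv/v=\log(k_1/k_2)-1+k_2/k_1$. This gives the bracket appearing in the $\varepsilon$-term of $M_1$ and in $\mathcal E$.

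For $\Sigma_1$ take $f(t)=(1-\log t/\log y)/s(t)$, where $s(t)=k_1(\tfrac12-\alpha-v)$. This $f$ is positive and monotone decreasing, since the numerator decreases while $s(t)$ also decreases, so monotonicity must be checked. If needed, split the weight so that each factor is handled monotonically; the paper's error term, with denominator $\tfrac12-\alpha-\tfrac1{k_1}$, indicates that one bounds $\max f\le(1-k_2/k_1)/\bigl(k_1(\tfrac12-\alpha-\tfrac1{k_1})\bigr)$. The main integral is
\[
\frac1{k_1}\int_{1/k_1}^{1/k_2}\frac{1-k_2v}{v(\tfrac12-\alpha-v)}\,dv .
\]
A partial-fraction decomposition, with $\frac{1}{v(\beta-v)}=\frac1\beta\bigl(\frac1v+\frac1{\beta-v}\bigr)$ and $\beta=\tfrac12-\alpha$, evaluates it as
\[
\frac{2}{1-2\alpha}\Bigl(\log\frac{k_1}{k_2}+\bigl(\tfrac{k_2(1-2\alpha)}2-1\bigr)\log\frac{1-2\alpha-2/k_2}{1-2\alpha-2/k_1}\Bigr).
\]
Multiplied by $2e^\gamma\cdot 2\sqrt N$, this matches $M_1$.

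The delicate point is the bookkeeping: constants like $2e^\gamma/k_1$ must come out exactly, and the monotonicity of $f$ for $\Sigma_1$ must be verified. I expect $f$ to be decreasing because its derivative sign reduces to $k_2(\beta-v)>1-k_2v$, i.e.\ $k_2\beta>1$, which is exactly \eqref{eq:alpha_range}. Collecting the factor $k_1e^{-\gamma}(1+\tfrac{1}{2(\log\widetilde D)^2})/\log X$ in front of the main terms then yields \eqref{eq:weighted_sum_prop}.
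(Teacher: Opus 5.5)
You have the same architecture as the paper: sieve each $\mathcal A_p$ at level $D_p$, split $|\mathcal A_p|\le 2\sqrt N/p+1$ into the $M_1$ and $M_2$ pieces, bound the remainder by $QD_p$, and handle both weighted prime sums by partial summation with $E=5k_1^3/(\log X)^3$. Your partial-fraction evaluation is correct, and so is your monotonicity check for $\Sigma_1$ (derivative sign $1-k_2(\tfrac12-\alpha)<0$, equivalent to \eqref{eq:alpha_range}); the paper leaves that check implicit.

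\textbf{The gap.} For $p>w:=X^{1/2-\alpha-1/k_1}$ you have $D_p<z$. Lemma~\ref{lem:linear_sieve} gives \eqref{eq:sieve_upper} only for $D\ge z$. The convention $h(s)=s^{-1}e^{-2}$ for $0<s\le 1$ in \eqref{eq:h_def} is notation; it does not extend the lemma to $s<1$. So your inequality $S(\mathcal A_p,\mathcal P,z)<|\mathcal A_p|V(z)\bigl(F(s_p)+\varepsilon C_1e^2h(s_p)\bigr)+R_p(D_p)$ is unjustified for these primes. If it did hold, you would obtain the proposition with $\log z$ in place of $\log\widetilde D$, a stronger statement. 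That is a sign a step is being skipped.

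\textbf{The fix (the paper's Case 2).} First lower the sifting parameter by monotonicity: $S(\mathcal A_p,\mathcal P,z)\le S(\mathcal A_p,\mathcal P,D_p)$. Then apply Lemma~\ref{lem:linear_sieve} with sifting parameter and level both equal to $D_p$, so $s=1$ and $F(1)+\varepsilon C_1e^2h(1)=2e^\gamma+\varepsilon C_1$. The remainder runs over $d\mid P(D_p)$ with $d<QD_p$, so it is still at most $QD_p$.

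Next, bound $V(D_p)$ directly by Lemma~\ref{lem:mertens_products}, not by comparison with $V(z)$:
\[
V(D_p)<\frac{e^{-\gamma}}{\log D_p}\Bigl(1+\frac{1}{2(\log D_p)^2}\Bigr),\qquad \log D_p=\frac{s_p\log X}{k_1}.
\]
The resulting factor $1/s_p$ turns $2e^\gamma+\varepsilon C_1$ into exactly $F(s_p)+\varepsilon C_1e^2h(s_p)$, since $F(s_p)=2e^\gamma/s_p$ and $e^2h(s_p)=1/s_p$ for $s_p\le1$. The inequality $D_p\ge D_y\ge\widetilde D$ then yields the factor $1+\frac{1}{2(\log\widetilde D)^2}$.

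This is exactly why the statement uses $\widetilde D=\min\{D_y,z\}$ rather than $z$. It is also why \eqref{eq:eps_condition_prop} is assumed for all $x\ge\widetilde D$: it is needed at $x=D_p$ to apply the sieve there. In your write-up neither feature does any work, which reflects the missing case.

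A minor point: no separate treatment of large $s_p$ is needed. Since $p\ge z$, you get $s_p\le k_1(\tfrac12-\alpha)-1<3$ directly from $k_1\le 8$.
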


\begin{proof}
    Let \(p\) be a prime with \(z\leq p<y\). Our first task is to obtain an upper bound for \(S(\mathcal A_p,\mathcal P,z)\) that is uniform in \(p\). The role of \(D_p=X^{\frac12-\alpha}/p\) is that the linear sieve lemma naturally produces estimates at level \(D_p\), depending on whether \(D_p\) lies above or below the sifting level \(z\). Define
    \[
    w:=X^{\frac{1}{2}-\alpha-\frac{1}{k_1}}.
    \]
    We treat separately the ranges \(z\leq p\leq w\) and \(w<p<y\).
    
    \smallskip
    \noindent\textbf{Case 1: \(z\leq p\leq w\).}
    Here \(D_p\geq z\). We invoke the upper-bound inequality \eqref{eq:sieve_upper} from Lemma~\ref{lem:linear_sieve} with \(g(d)=1/d\), \(D=D_p\), and \(s=s_p\), obtaining
    \begin{equation}\label{eq:Sq_case1}
    S(\mathcal{A}_p, \mathcal{P}, z)
    < |\mathcal{A}_p| V(z)\left(F(s_p)+\varepsilon C_1(\varepsilon)e^2h(s_p)\right)
    +\sum_{\substack{d\mid P(z)\\ d < Q D_p}} |r_p(d)|,
    \end{equation}
    where
    \begin{equation}\label{eq:rq_def}
    r_p(d):=|\mathcal{A}_{pd}|-\frac{|\mathcal{A}_p|}{d}.
    \end{equation}
    We now bound \(V(z)\) using Lemma~\ref{lem:mertens_products} as
    \[
    V(z)=\prod_{\substack{\ell \in \mathcal P \\ \ell < z}}\left(1-\frac{1}{\ell}\right)
    \leq \frac{e^{-\gamma}}{\log z}\left(1+\frac{1}{2(\log z)^2}\right)
    \leq \frac{k_1e^{-\gamma}}{\log X}\left(1+\frac{1}{2(\log \widetilde D)^2}\right).
    \]
    Substituting this into \eqref{eq:Sq_case1} gives
    \begin{equation}\label{eq:Sq_case1_refined}
    S(\mathcal{A}_p,\mathcal{P},z)
    < k_1e^{-\gamma}\left(1+\frac{1}{2(\log \widetilde D)^2}\right)
    |\mathcal{A}_p|
    \left(\frac{F(s_p)+\varepsilon C_1(\varepsilon)e^2 h(s_p)}{\log X}\right)
    +\sum_{\substack{d\mid P(z)\\ d< Q D_p}} |r_p(d)|.
    \end{equation}
    
    \smallskip
    \noindent\textbf{Case 2: \(w<p<y\).}
    In this range, \(D_p<z\). We first lower the sieve level using monotonicity in the sifting parameter. This gives us
    \[
    S(\mathcal{A}_p,\mathcal{P},z)\leq S(\mathcal{A}_p,\mathcal{P},D_p).
    \]
    Applying \eqref{eq:sieve_upper} at level \(D_p\) (so \(s=1\)) yields
    \begin{align}
    S(\mathcal{A}_p,\mathcal{P},z)
    &\leq |\mathcal{A}_p| V(D_p)\left(F(1)+\varepsilon C_1(\varepsilon)e^2h(1)\right)
    +\sum_{\substack{d\mid P(z)\\ d < Q D_p}} |r_p(d)| \nonumber\\
    &= |\mathcal{A}_p| V(D_p)\left(2e^{\gamma}+\varepsilon C_1(\varepsilon)\right)
    +\sum_{\substack{d\mid P(z)\\ d < Q D_p}} |r_p(d)|, \label{eq:Sp_case2}
    \end{align}
    with \(r_p(d)\) as in \eqref{eq:rq_def}. Using Lemma~\ref{lem:mertens_products} once more,
    \begin{equation}\label{eq:VDp_bound}
    V(D_p)\leq \frac{e^{-\gamma}}{\log D_p}\left(1+\frac{1}{2(\log D_p)^2}\right)
    \leq \frac{k_1e^{-\gamma}}{s_p\log X}\left(1+\frac{1}{2(\log \widetilde D)^2}\right).
    \end{equation}
    Inserting \eqref{eq:VDp_bound} into \eqref{eq:Sp_case2}, and noting that \(D_p\geq D_y\), we find
    \begin{align}
    S(\mathcal{A}_p,\mathcal{P},z)
    &\leq \frac{k_1e^{-\gamma}}{\log X}\left(1+\frac{1}{2(\log \widetilde D)^2}\right)
    |\mathcal{A}_p|
    \left(\frac{2e^{\gamma}}{s_p}+\frac{\varepsilon C_1(\varepsilon)}{s_p}\right)
    +\sum_{\substack{d\mid P(z)\\ d< Q D_p}} |r_p(d)| \nonumber\\
    &= k_1e^{-\gamma}\left(1+\frac{1}{2(\log \widetilde D)^2}\right)
    |\mathcal{A}_p|
    \left(\frac{F(s_p)+\varepsilon C_1(\varepsilon)e^2h(s_p)}{\log X}\right)
    +\sum_{\substack{d\mid P(z)\\ d< Q D_p}} |r_p(d)|, \label{eq:Sp_case2_refined}
    \end{align}
    since \(F(s)=2e^\gamma/s\) and \(e^2h(s)=1/s\) at \(s=s_p\) in this range.
    
    \smallskip
    In summary, \eqref{eq:Sq_case1_refined} and \eqref{eq:Sp_case2_refined} provide the same upper bound for \(S(\mathcal A_p,\mathcal P,z)\) throughout \(z\leq p<y\). Weighting by \(\left(1-\log p/\log y\right)\) and summing over \(p\) gives
    \begin{align}
    \sum_{\substack{p\in\mathcal{P}\\ z\leq p<y}}
    &\left(1-\frac{\log p}{\log y}\right) S(\mathcal{A}_p,\mathcal{P},z) \nonumber\\
    &\leq k_1e^{-\gamma}\left(1+\frac{1}{2(\log \widetilde D)^2}\right)
    \sum_{\substack{p\in\mathcal{P}\\ z\leq p < y}}
    \left(1-\frac{\log p}{\log y}\right)
    |\mathcal{A}_p|
    \left(\frac{F(s_p)+\varepsilon C_1(\varepsilon)e^2 h(s_p)}{\log X}\right) \nonumber\\
    &\qquad
    +\sum_{\substack{p\in\mathcal{P}\\ z\leq p < y}}
    \left(1-\frac{\log p}{\log y}\right)
    \sum_{\substack{d\mid P(z)\\ d< Q D_p}} |r_p(d)|. \label{eq:weighted_split}
    \end{align}
    
    \smallskip
    \noindent\textbf{Bounding the remainder.}
    Because \(\mathcal A\) is an interval of consecutive integers, we have
    \[
    r_p(d)=|\mathcal{A}_{pd}|-\frac{|\mathcal{A}_p|}{d}
    \]
    satisfies \(|r_p(d)|\leq 1\). Consequently,
    \[
    \sum_{\substack{p\in\mathcal{P}\\ z\leq p < y}}
    \left(1-\frac{\log p}{\log y}\right)
    \sum_{\substack{d\mid P(z)\\ d< Q D_p}} |r_p(d)|
    \leq Q X^{\frac{1}{2}-\alpha}
    \sum_{\substack{p\in\mathcal{P}\\ z\leq p < y}}
    \left(1-\frac{\log p}{\log y}\right)\frac{1}{p}.
    \]
    
    We estimate the weighted reciprocal-prime sum via Lemma~\ref{lem:partial_summation_upper} together with Lemma~\ref{lem:vanlalngaia_prime_reciprocals}. In the notation of Lemma~\ref{lem:partial_summation_upper}, take
    \[
    g(x)=\log\log x,\qquad
    f(t)=1-\frac{\log t}{\log y},
    \]
    and
    \[
    c(n)=
    \begin{cases}
    \dfrac{1}{n}, & \text{if \(n\) is prime},\\[0.5ex]
    0, & \text{otherwise},
    \end{cases}
    \]
    with
    \[
    E=\frac{5k_1^3}{(\log X)^3}.
    \]
    (The choice of \(E\) comes from Lemma~\ref{lem:vanlalngaia_prime_reciprocals}.) This yields
    \begin{equation}\label{eq:weighted_recip_prime_sum}
    \sum_{\substack{p\in\mathcal{P}\\ z\leq p < y}}
    \left(1-\frac{\log p}{\log y}\right)\frac{1}{p}
    \leq \log\frac{k_1}{k_2}-1+\frac{k_2}{k_1}
    +\frac{5k_1^3}{(\log X)^3}\left(1-\frac{k_2}{k_1}\right).
    \end{equation}
    Therefore the remainder contribution in \eqref{eq:weighted_split} does not exceed \(\mathcal E(X)\), with \(\mathcal E(X)\) as in \eqref{eq:E_def}.

    \smallskip
    \noindent\textbf{Bounding the main term.}
    Again using that \(\mathcal A\) is a set of consecutive integers, we have
    \begin{equation}\label{eq:Ap_count}
    |\mathcal{A}_p|\leq \frac{2\sqrt{N}}{p}+1.
    \end{equation}
    Since \(k_1\leq 8\), we have \(0<s_p\leq 3\), and Lemma~\ref{lem:Ff_explicit} gives
    \begin{equation}\label{eq:Fsp_eval}
    F(s_p)=\frac{2e^\gamma}{s_p}=\frac{2e^\gamma\log X}{k_1\log D_p}.
    \end{equation}
    Combining \eqref{eq:Ap_count} and \eqref{eq:Fsp_eval} with the inequalities \(s_p\geq k_\alpha\), \(D_p\geq D_y\), and the monotonicity of \(h(s)\), we obtain
    \begin{align}\label{eq:main_term_decomp}
    &\sum_{\substack{p\in\mathcal{P}\\ z\leq p<y}}
    \left(1-\frac{\log p}{\log y}\right)|\mathcal{A}_p|
    \left(\frac{F(s_p)+\varepsilon C_1(\varepsilon)e^2 h(s_p)}{\log X}\right) \nonumber\\
    &\qquad \leq 2 \sqrt{N} \sum_{\substack{p\in\mathcal{P}\\ z\leq p<y}}\frac{1}{p}
    \left(1-\frac{k_2\log p}{\log X}\right)
    \left(\frac{2e^\gamma}{k_1\log D_p}
    +\frac{\varepsilon C_1(\varepsilon)e^2 h(k_\alpha)}{\log X}\right) \nonumber\\
    &\qquad \qquad
    +\sum_{\substack{p\in\mathcal{P}\\ z\leq p < y}}\left(1-\frac{k_2}{k_1}\right)
    \left(\frac{2e^\gamma}{k_1\log D_y}
    +\frac{\varepsilon C_1(\varepsilon)e^2 h(k_\alpha)}{\log X}\right).
    \end{align}
    
    The second term on the right-hand side is bounded by
    \[
    \frac{y}{\log X}\left(1-\frac{k_2}{k_1}\right)
    \left(\frac{2e^\gamma}{k_\alpha}+\varepsilon C_1(\varepsilon)e^2 h(k_\alpha)\right)
    = M_2(X),
    \]
    matching \eqref{eq:M2_def}.

    For the first term in \eqref{eq:main_term_decomp}, we apply Lemma~\ref{lem:partial_summation_upper} together with Lemma~\ref{lem:vanlalngaia_prime_reciprocals} in the same manner as above, but now with the weight
    \[
    f(t)=\frac{1}{\log D_t}-\frac{k_2\log t}{\log D_t\log X},
    \]
    so that the factor \(1/\log D_p\) is incorporated into the summation. This gives
    \begin{align*}
    &\sum_{\substack{p \in \mathcal P \\ z\leq p<y}}
    \left(1-\frac{k_2\log p}{\log X}\right)\frac{1}{p\log D_p} \\
    & \qquad\leq \frac{1}{(1-2\alpha)\log X}
    \Biggl(
    2 \log\frac{k_1}{k_2}
    +\left(k_2(1 - 2\alpha)-2\right)
    \log\frac{1-2\alpha-\frac{2}{k_2}}{1-2\alpha-\frac{2}{k_1}}
    \Biggr) \\
    & \qquad \qquad
    +\frac{5k_1^3}{\left(\frac12-\alpha-\frac{1}{k_1}\right)(\log X)^4}\left(1-\frac{k_2}{k_1}\right).
    \end{align*}
    Together with \eqref{eq:weighted_recip_prime_sum}, this yields that the first term in \eqref{eq:main_term_decomp} does not exceed \(M_1(X)\), where \(M_1(X)\) is as in \eqref{eq:M1_def}.

    Finally, inserting the bounds for the main term and the remainder term into \eqref{eq:weighted_split} gives \eqref{eq:weighted_sum_prop}.
\end{proof}

\section{Proof of Theorem~\ref{thm:main_omega3}}\label{sec:mainsect}

In this section we combine the results of Sections~\ref{sec:prelims} and~\ref{sec:Richertsect} to prove Theorem~\ref{thm:main_omega3}. Throughout, \(\mathcal{A}\) is as in \eqref{Adef} and \(X\) is as in \eqref{eq:X_def}. We first verify the condition \eqref{eq:Q0_cond} using two auxiliary estimates. We then fix the parameters \(k,k_1,k_2,c,\delta\), apply the sieve bounds from Section~\ref{sec:prelims} within the weighted-sieve framework of Section~\ref{sec:Richertsect}, and optimise numerically.

\begin{lemma}[{\cite[Lemma~7]{glasby2021most}}]\label{lem:glasby_tail}
For all \(a\geq 12\),
\[
\sum_{p\geq a}\frac{1}{p^2}\le \frac{2.22}{a\log a}.
\]
\end{lemma}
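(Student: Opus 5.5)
This lemma is quoted from Glasby~\cite[Lemma~7]{glasby2021most}, but a self-contained proof is easy to sketch by partial summation against an explicit upper bound for the prime counting function. I would write
\[
\sum_{p\geq a}\frac{1}{p^2}=\int_{a^-}^{\infty}\frac{d\pi(t)}{t^2}
=-\frac{\pi(a^-)}{a^2}+2\int_a^\infty\frac{\pi(t)}{t^3}\,dt
\leq 2\int_a^\infty\frac{\pi(t)}{t^3}\,dt.
\]
Dropping the boundary term is wasteful but keeps things simple. For the integrand I would insert the Rosser--Schoenfeld bound \(\pi(t)<1.25506\,t/\log t\), valid for \(t>1\). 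This gives
\[
\sum_{p\geq a}\frac{1}{p^2}\leq 2.51012\int_a^\infty\frac{dt}{t^2\log t}\leq\frac{2.51012}{\log a}\int_a^\infty\frac{dt}{t^2}=\frac{2.51012}{a\log a}.
\]

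This falls short of the constant \(2.22\), so the crude step must be sharpened. There are two places to recover the loss. First, I would retain the boundary term \(-\pi(a^-)/a^2\) and bound it from below using the lower bound \(\pi(t)>t/\log t\) for \(t\geq 17\). This subtracts roughly \(1/(a\log a)\). Second, I would use the sharper bound \(\pi(t)<\frac{t}{\log t}\bigl(1+\frac{3}{2\log t}\bigr)\). Combining the two yields a main coefficient of about \(2-1=1\) plus lower-order terms of size \(O(1/\log a)\). This is comfortably below \(2.22\) once \(a\) exceeds some modest threshold \(a_0\), say a few hundred, which I would make explicit.

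The main obstacle is the small range \(12\leq a<a_0\), where the lower-order terms are not yet negligible. Since \(\sum_{p\geq a}p^{-2}\) is a step function that decreases as \(a\) crosses primes, while \(2.22/(a\log a)\) is decreasing and continuous, it suffices to check the inequality at the left endpoints of the intervals between consecutive primes, that is, just after each prime. Concretely, for each prime \(q\) I would check \(\sum_{p>q}p^{-2}\) against \(2.22/(q\log q)\), together with \(a=12\) itself. Each tail can be computed as \(P(2)-\sum_{p\leq q}p^{-2}\), where \(P(2)\approx 0.4522474\) is the prime zeta value. This is a finite computation. I expect the tightest case to occur near \(a=12\) or \(a=13\), where the ratio is largest, and I would check it first by hand to confirm that \(2.22\) is the right constant.

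In our application only large \(a=z\geq 7080\) matters, so the asymptotic range above already suffices there.
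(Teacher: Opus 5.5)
The paper gives no proof of this lemma; it only cites Glasby--Praeger--Unger. Your self-contained route via the Rosser--Schoenfeld bounds is therefore genuinely different, and its analytic half is sound, indeed better than you claim. Keep the boundary term with $\pi(a^-)\ge a/\log a$; this follows by letting $t\to a^-$ in $\pi(t)>t/\log t$, which is valid for $t\ge 17$. Insert $\pi(t)<\frac{t}{\log t}\bigl(1+\frac{3}{2\log t}\bigr)$ and use $\int_a^\infty t^{-2}(\log t)^{-j}\,dt\le a^{-1}(\log a)^{-j}$. This gives
\[
\sum_{p\ge a}\frac{1}{p^2}\le\frac{1}{a\log a}\left(1+\frac{3}{\log a}\right)\qquad(a>17).
\]
The right side is below $2.22/(a\log a)$ as soon as $\log a\ge 3/1.22$. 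So $a_0=17$ already suffices, not a few hundred. At $a=z\ge 7498$ the same bound even gives the constant $1.34$ in place of $2.22$.

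The gap is in the finite range: your reduction runs the wrong way. On each interval $(q,q']$ between consecutive primes, the left side is constant and equal to $\sum_{p\ge q'}p^{-2}$. Meanwhile $2.22/(a\log a)$ decreases from $2.22/(q\log q)$ to $2.22/(q'\log q')$. The binding point is therefore the right endpoint $a=q'$, where the tail still contains $q'^{-2}$.

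Comparing $\sum_{p>q}p^{-2}$ with $2.22/(q\log q)$ tests the left side against the \emph{largest} value of the right side on the interval. It proves nothing about the rest of $(q,q']$. Likewise $a=12$ is not a critical point, since $[12,13]$ is governed by $a=13$.

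The correct check is $\sum_{p\ge q}p^{-2}\le 2.22/(q\log q)$ for every prime $q\in[13,a_0]$. With $a_0=17$ this means only $q=13$ and $q=17$, where $a\log a\sum_{p\ge a}p^{-2}\approx 0.75$ and $0.80$ respectively. These values also show that your expectation of near-equality at $a=12$ or $13$ is misplaced. By the bound above, this ratio is at most $1+3/\log a$, and it tends to $1$. So $2.22$ is a generous constant rather than a sharp one, and there is nothing to ``confirm'' by hand.
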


In our application we apply Lemma~\ref{lem:glasby_tail} only with \(a=z\), and our parameter choices ensure \(z>1000\), so the hypothesis \(a\geq 12\) is always satisfied.

\begin{lemma}[{\cite[Theorem~1]{rosser_schoenfeld1962}}]\label{lem:rosser_pi_upper}
    For \(x>1\),
    \[
    \pi(x)<\frac{x}{\log x}\left(1+\frac{3}{2\log x}\right),
    \]
    where \(\pi(x)\) denotes the prime-counting function.
\end{lemma}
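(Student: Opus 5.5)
This is Rosser--Schoenfeld's Theorem~1, a quoted result; the paper uses it without proof. If I had to justify it, I would derive it from explicit Chebyshev-type estimates for \(\theta(x)=\sum_{p\le x}\log p\) and then apply partial summation. The plan has four steps.

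\textbf{Step 1: reduce to \(\theta\).} By Abel summation,
\[
\pi(x)=\frac{\theta(x)}{\log x}+\int_2^x\frac{\theta(t)}{t\log^2 t}\,dt .
\]
So the claim \(\pi(x)<\frac{x}{\log x}\bigl(1+\frac{3}{2\log x}\bigr)\) follows from an upper bound \(\theta(t)\le(1+\eta(t))\,t\) with \(\eta\) small. The target \(\frac{x}{\log x}+\frac{3x}{2\log^2 x}\) has slack: \(\int_2^x dt/\log^2 t\) is asymptotically \(x/\log^2 x\), leaving \(\tfrac12 x/\log^2 x\) to absorb the error from \(\eta\) and the lower-order terms of the integral.

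\textbf{Step 2: the large-\(x\) range.} For \(x\) beyond some explicit threshold \(x_0\), I would use an explicit bound \(\theta(x)<x(1+\eta_0)\). This comes from a zero-free region plus numerical verification of the Riemann hypothesis up to some height, exactly as in Rosser--Schoenfeld. I would then check that the total error, namely \(\eta_0 x/\log x\) together with the correction from integrating \(1/\log^2 t\), stays below the slack \(\tfrac12 x/\log^2 x\). That requires \(\eta_0\log x<\tfrac12\) minus the small corrections, which holds when \(\eta_0\ll 1/\log x\) in the relevant range. Using a bound of the shape \(\theta(x)<x+x/\log^2 x\)-type error makes this automatic.

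\textbf{Step 3: the small-\(x\) range.} For \(1<x\le x_0\), I would argue directly. Since \(\pi\) is a step function and the right-hand side is eventually increasing, it suffices to check the inequality at \(x\) just below each prime, i.e.\ to check \(\pi(p_n)=n\) against the right-hand side evaluated slightly below \(p_{n+1}\) in the left limit. This is a finite computation, or can be done with cruder Chebyshev bounds on dyadic blocks.

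For very small \(x\), near \(1\), the right-hand side blows up, so the inequality is trivial. Around \(x\in(e^{1.5},\infty)\) the function \(\frac{x}{\log x}\bigl(1+\frac{3}{2\log x}\bigr)\) is increasing, which justifies checking only at the jump points of \(\pi\).

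\textbf{Main obstacle.} The hard step is Step 2: obtaining an explicit \(\theta\) bound with error small enough relative to \(1/\log x\) uniformly down to a computationally reachable \(x_0\). This is the genuinely analytic input, via explicit zero-free regions and zero-counting. In practice I would simply cite a modern explicit \(\theta\) bound (for example, Dusart's) rather than reprove it.
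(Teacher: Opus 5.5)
The paper gives no proof of this lemma; it only quotes Rosser--Schoenfeld. Your overall route (explicit bounds for \(\theta\), partial summation, and a finite check for small \(x\)) is the standard one, but as written the reduction in Step~3 fails.

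Put \(g(x)=\frac{x}{\log x}\bigl(1+\frac{3}{2\log x}\bigr)\). You are right that \(g\) increases for \(x>e^{3/2}\), since \(g'(x)\log^3 x=\log^2 x+\frac12\log x-3\). On \((1,e^{3/2}]\) the claim is trivial, because there \(g(x)\ge\frac43 e^{3/2}>5>\pi(x)\). But \(\pi\) is right-continuous and equals \(n\) on \([p_n,p_{n+1})\). With \(g\) increasing, the binding point of each step is therefore its \emph{left} endpoint \(p_n\), where \(g\) is smallest. What you must verify is \(n<g(p_n)\) for every prime \(5\le p_n\le x_0\). Comparing \(n\) with \(g(p_{n+1}^-)=g(p_{n+1})\) tests the most favourable point of each step and implies nothing about the rest of it.

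Your fallback of Chebyshev-type bounds on dyadic blocks cannot work either. The slack is only a relative factor of about \(1/(2\log x)\), while \(\pi(2a)\) is roughly \(2g(a)\) for large \(a\); already \(\pi(2000)=303>g(1000)\approx 176\). Blocks must have ratio \(1+O(1/\log x)\), and the bounds used for \(\pi\) on them must be accurate to relative error \(O(1/\log x)\).

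Step~2 also needs tightening. A constant \(\eta_0\) cannot give \(\eta_0\log x<\frac12\) on the unbounded range \(x\ge x_0\). You need an error that is \(o(1/\log x)\), such as the \(\exp(-c\sqrt{\log x})\) saving from the zero-free region, or a bound \(|\theta(x)-x|<\kappa\,x/\log^2 x\). Even then the conclusion is not automatic, because \(\int_2^x dt/\log^2 t\) has the secondary term \(2x/\log^3 x\). You need roughly \((2+\kappa)/\log x<\frac12\) plus lower-order corrections, and this is what dictates \(x_0\).

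Finally, the integral in Step~1 runs over all of \([2,x]\), so you also need a bound for \(\theta(t)\) when \(t<x_0\). Alternatively, restart the partial summation at \(x_0\) using the exact values of \(\pi(x_0)\) and \(\theta(x_0)\).
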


We now prove Theorem~\ref{thm:main_omega3}. By Lemma~\ref{lem:small_n}, it is enough to treat the case \(N \geq 10^{31}\). Throughout this subsection we take
\[
k=3,\quad k_1=8,\quad \text{and} \quad  k_2=3.17.
\]
With these choices we verify \eqref{eq:Q0_cond} with
\[
c=0.297,\quad \delta=\frac14.
\]

\begin{lemma}\label{lem:Q0_omega3}
    Let \(N\geq 10^{31}\), \(z=X^{1/8}\), and \(y=X^{1/3.17}\). Then
    \[
    \sum_{\substack{p\in\mathcal P \\ z\leq p<y}} |\mathcal A_{p^2}|
    \leq 0.297\,|\mathcal A|^{3/4}.
    \]
    In particular, \eqref{eq:Q0_cond} holds with \(c=0.297\) and \(\delta=1/4\).
\end{lemma}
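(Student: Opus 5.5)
Each \(|\mathcal A_{p^2}|\) is bounded by the trivial count for consecutive integers. The resulting sum splits into a main term, handled by Lemma~\ref{lem:glasby_tail}, and a count of primes, handled by Lemma~\ref{lem:rosser_pi_upper}. Both are then compared with \(|\mathcal A|^{3/4}\) using \(N\geq 10^{31}\).

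\textbf{Step 1: the pointwise bound.} Since \(\mathcal A=\mathbb Z\cap(N,N+2\sqrt N)\) is a set of consecutive integers, for every prime \(p\),
\[
|\mathcal A_{p^2}|\leq \frac{2\sqrt N}{p^2}+1 .
\]
Summing over \(z\leq p<y\), and enlarging the range of \(p\) where convenient, gives
\[
\sum_{z\leq p<y}|\mathcal A_{p^2}|\leq 2\sqrt N\sum_{p\geq z}\frac1{p^2}+\pi(y)\leq \frac{4.44\sqrt N}{z\log z}+\frac{y}{\log y}\left(1+\frac{3}{2\log y}\right).
\]

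\textbf{Step 2: express everything in terms of \(X\).} We have \(X=\max\mathcal A\approx N+2\sqrt N\), so \(\sqrt N\leq X^{1/2}\). Also \(|\mathcal A|\geq 2\sqrt N-1\); more precisely \(|\mathcal A|\geq 2\sqrt N-2\), and \(|\mathcal A|=2n-1\) when \(N=n^2\). Hence \(|\mathcal A|^{3/4}\) is of order \(X^{3/8}\).

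The two terms then compare as follows.
\begin{itemize}
\item The first term is of order \(X^{1/2-1/8}/\log X=X^{3/8}/\log X\), which is smaller than \(X^{3/8}\) by a factor \(\log X\). With \(z=X^{1/8}\) it is about \(35.5\,X^{3/8}/\log X\).
\item The second term is of order \(X^{1/3.17}/\log X\). Since \(1/3.17\approx 0.3155<3/8\), it is smaller than \(X^{3/8}\) by a power of \(X\).
\end{itemize}

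\textbf{Step 3: monotonicity and evaluation at the endpoint.} Divide through by \(|\mathcal A|^{3/4}\geq (2\sqrt N-2)^{3/4}\). The resulting ratio should be decreasing in \(N\), so it suffices to check it at \(N=10^{31}\).

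There \(\log X\approx 71.4\), and \(|\mathcal A|^{3/4}\approx 2^{3/4}X^{3/8}\approx 1.68\,X^{3/8}\). The first term contributes about
\[
\frac{35.5}{71.4\cdot 1.68}\approx 0.296 .
\]
The second term contributes roughly
\[
X^{0.3155-0.375}\cdot\frac{3.17}{71.4\cdot1.68}\approx 10^{-1.84}\cdot 0.026,
\]
which is negligible. The total is therefore just under \(0.297\).

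The main obstacle is this numerical tightness. The constant \(0.297\) leaves essentially no slack, so Step 3 must be done carefully. In particular we need exact rather than asymptotic relations between \(X\), \(N\) and \(|\mathcal A|\): \(X\leq N+2\sqrt N\), and \(|\mathcal A|\geq 2\sqrt N-2\) (or \(2\sqrt N-1\) for square \(N\)). We also need to justify monotonicity in \(N\). For this, write the ratio as a sum of terms of the form \(X^{-\beta}(\log X)^{-j}\) with \(\beta\geq 0\) and \(j\geq 1\), times factors \((\sqrt N/(\sqrt N-1))^{3/4}\) that decrease in \(N\). Each such term is decreasing, so the endpoint evaluation suffices.
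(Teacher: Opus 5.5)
Your proposal is correct and follows essentially the same route as the paper: you use the trivial count for multiples of $p^2$ among consecutive integers, Lemma~\ref{lem:glasby_tail} with $a=z$, Lemma~\ref{lem:rosser_pi_upper} for $\pi(y)$, and a numerical comparison at $N=10^{31}$ giving roughly $0.2959+0.0004<0.297$. The differences are cosmetic: the paper bounds $|\mathcal A_{p^2}|$ by $|\mathcal A|/p^2+1$ rather than $2\sqrt N/p^2+1$, and uses $z\ge N^{1/8}\ge 2^{-1/4}|\mathcal A|^{1/4}$ together with $\log z\ge \tfrac{31}{8}\log 10$, which avoids any lower bound on $|\mathcal A|$ or monotonicity argument in the main term, whereas your route incurs only the harmless factor $(1-1/\sqrt N)^{-3/4}$.
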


\begin{proof}
    Since \(\mathcal A\) is a set of consecutive integers, we have
    \begin{align}
    \sum_{\substack{p\in\mathcal P \\ z\leq p<y}} |\mathcal A_{p^2}|
    &\leq \sum_{\substack{p\in\mathcal P \\ z\leq p<y}}\left(\frac{|\mathcal A|}{p^2}+1\right) \notag\\
    &= |\mathcal A|\sum_{\substack{p\in\mathcal P \\ z\leq p<y}} \frac{1}{p^2}
    \;+\; \sum_{\substack{p\in\mathcal P \\ z\leq p<y}} 1 \notag\\
    &\leq \frac{2.22\,|\mathcal A|}{z\log z}+\pi(y),
    \label{eq:Q0_omega3_start}
    \end{align}
    where we used Lemma~\ref{lem:glasby_tail} to bound the sum over \(p^{-2}\).
    
    We first bound the term involving \(z\). Since \(X>N\), we have
    \[
    z=X^{1/8}\geq N^{1/8}\geq 7498.
    \]
    Also,
    \[
    z\geq N^{1/8}
    =2^{-1/4}(2\sqrt N)^{1/4}
    \geq 2^{-1/4}|\mathcal A|^{1/4}.
    \]
    Therefore,
    \begin{equation}\label{eq:Q0_omega3_zterm}
    \frac{2.22\,|\mathcal A|}{z\log z}
    \leq
    \frac{8 \cdot 2.22\cdot 2^{1/4}|\mathcal A|^{3/4}}{31\log 10}
    \leq 0.296\,|\mathcal A|^{3/4}.
    \end{equation}
    
    Next we bound \(\pi(y)\). We have
    \[
    y = X^{1/3.17} \geq N^{1/3.17} \geq 10^{31/3.17}.
    \]
    Also,
    \[
    y \leq (N + 2\sqrt{N})^{1/3.17} \leq 1.001 N^{1/3.17} \leq 0.0123\,|\mathcal{A}|^{3/4}.
    \]
    By Lemma~\ref{lem:rosser_pi_upper},
    \begin{equation}\label{eq:Q0_omega3_piterm}
        \pi(y) \leq 0.0523y \leq 6.5 \cdot 10^{-4}|\mathcal{A}|^{3/4}.
    \end{equation}
        
    Finally, substituting \eqref{eq:Q0_omega3_zterm} and \eqref{eq:Q0_omega3_piterm} into \eqref{eq:Q0_omega3_start}, we get
    \[
    \sum_{\substack{ p\in\mathcal P \\ z\leq p<y}} |\mathcal A_{p^2}|
    \leq 0.297\,|\mathcal A|^{3/4},
    \]
    as required.
\end{proof}

Next, we then apply the weighted-sieve framework from Section~\ref{sec:Richertsect}, together with the explicit sieve estimates from Section~\ref{sec:prelims}, to complete the proof for the remaining range \(N \geq 10^{31}\).

\begin{proof}[Proof of Theorem~\ref{thm:main_omega3}]
    Assume throughout that \(N \geq 10^{31}\). We apply Lemma~\ref{lem:Q0_omega3} and Lemma~\ref{lem:rk_lower_weighted} with
    \[
    k=3,\qquad z=X^{1/8},\qquad y=X^{1/3.17},
    \]
    so that
    \[
    \lambda=k+1-k_2=0.83.
    \]
    For \eqref{eq:rk_lower_main}, this gives
    \begin{equation}\label{eq:r3_lower_start}
    r_3(\mathcal A)
    \geq \frac{0.83}{3}S(\mathcal A,\mathcal P,z)
    -\frac{1}{3}\sum_{\substack{p\in\mathcal P \\ z\leq p<y}}
    \left(1-\frac{\log p}{\log y}\right)S(\mathcal A_p,\mathcal P,z)
    -\frac{0.83\cdot 0.297}{3}|\mathcal A|^{3/4}.
    \end{equation}
    We now bound each term on the right-hand side.
    
    First, we treat the \(Q_0(c,\delta)\)-remainder term. Since \(|\mathcal A|\leq 2\sqrt N\), we have
    \begin{equation}\label{eq:r3_q0_remainder_bound}
    \frac{0.83\cdot 0.297}{3}|\mathcal A|^{3/4}
    \leq 0.0014\frac{\sqrt N}{\log X}, 
    \end{equation}
    for all \(N \geq 10^{31}\).
    
    Next we obtain a lower bound for \(S(\mathcal A,\mathcal P,z)\). Let
    \[
    D=z^s,\qquad s=\frac{\log D}{\log z},
    \]
    where we assume \(s\in[3,4]\) and will be chosen later. Since \(z=X^{1/8}\) and \(N\geq 10^{31}\), we have \(z > 7498\), so Lemma~\ref{lem:linear_sieve} (with \(g(d)=1/d\)) together with Lemma~\ref{lem:mertens_products} gives
    \begin{align}
    S(\mathcal A,\mathcal P,z)
    &>|\mathcal A|V(z)\Bigl(f(s)-\varepsilon C_2(\varepsilon)e^2h(s)\Bigr)
    -\sum_{d<QD}\mu^2(d) \notag\\
    &\geq \frac{e^{-\gamma}(2\sqrt N-1)}{\log z}
    \left(1-\frac{1}{2(\log z)^2}\right)
    \Bigl(f(s)-\varepsilon C_2(\varepsilon)e^2h(s)\Bigr)
    -\sum_{d<QD}\mu^2(d),
    \label{eq:SAz_lower_1}
    \end{align}
    where \(\varepsilon = 1.13\cdot10^{-3}\) (via Lemma~\ref{lem:mertens_explicit_bounds}) and \(C_2(\varepsilon) = 116\) from \cite[Table~1]{bordignon_johnston_starichkova2025}.
    
    Since \(s\in[3,4]\), Lemma~\ref{lem:Ff_explicit} and \eqref{eq:h_def} give
    \[
    f(s)=\frac{2e^\gamma\log(s-1)}{s},
    \qquad
    h(s)=3s^{-1}e^{-s}.
    \]
    It is therefore convenient to define
    \begin{equation}\label{eq:Cs_def_omega3}
        f(s)-\varepsilon C_2(\varepsilon)e^2h(s) > C(s):=\frac{2e^\gamma\log(s-1)-0.39324e^{2-s}}{s}.
    \end{equation}
    
    To bound the square-free remainder term, we use the explicit estimate (see \cite[Lemma~4.6]{ramare2013mobius})
    \[
    \sum_{d\leq x}\mu^2(d)\leq \frac{6}{\pi^2}x+0.5\sqrt x\qquad (x\geq 10).
    \]
    For \(D = z^s \geq z^3 \geq 4.21 \cdot 10^{11}\), this implies
    \begin{equation}\label{eq:mu_square-free_remainder}
    \sum_{d<2D}\mu^2(d)\leq 1.22\,D 
    =1.22\,X^{s/8}.
    \end{equation}
    Substituting \eqref{eq:Cs_def_omega3} and \eqref{eq:mu_square-free_remainder} into \eqref{eq:SAz_lower_1}, we obtain
    \begin{equation}\label{eq:SAz_lower_2}
    S(\mathcal A,\mathcal P,z)
    >
    \frac{e^{-\gamma}(2\sqrt N-1)}{\log z}
    \left(1-\frac{1}{2(\log z)^2}\right)C(s)
    -1.22 X^{s/8}.
    \end{equation}
    
    We now choose \(s=3.33\), for which \(C(s)>0.873\). A direct calculation then gives 
    \begin{equation}\label{eq:SAz_final_lower}
        \frac{0.83}{3}S(\mathcal A,\mathcal P,z) > 2.095\frac{\sqrt N}{\log X}. 
    \end{equation}
    
    It remains to bound the weighted sum in \eqref{eq:r3_lower_start}. We apply Proposition~\ref{prop:weighted_sifting_bound} with \(\alpha=0.06\). For this choice of \(\alpha\), one checks that
    \[
    D_y=X^{\frac{1}{2}-0.06-\frac{1}{3.17}}>7258,
    \]
    so that the product condition in Proposition~\ref{prop:weighted_sifting_bound} holds with \(Q =2\), \(\varepsilon=1.13 \cdot 10^{-3}\) (from Lemma~\ref{lem:mertens_explicit_bounds}) and \(C_1(\varepsilon) = 115\) (from \cite[Table~1]{bordignon_johnston_starichkova2025}).
    
    Hence Proposition~\ref{prop:weighted_sifting_bound} yields
    \begin{align*}
    \frac{1}{3}\sum_{\substack{p\in\mathcal P\\ z\leq p<y}}
    \left(1-\frac{\log p}{\log y}\right)S(\mathcal A_p,\mathcal P,z)
    &\leq
    \frac{8 e^{-\gamma}}{3}
    \left(1+\frac{1}{2(\log D_y)^2}\right)\left(M_1(X)+M_2(X)\right)
    +\frac{\mathcal E(X)}{3}.
    \end{align*}
    Using the definitions of \(M_1(X)\), \(M_2(X)\), and \(\mathcal E(X)\), together with the above parameter choices, we compute
    \[
    \frac{8 e^{-\gamma}}{3} \left(1+\frac{1}{2(\log D_y)^2}\right) \leq 1.507, 
    \]
    \[
    M_1(X)
    \leq 1.230\frac{\sqrt N}{\log X}, 
    \]
    \[
    M_2(X)
    \leq 4.6 \cdot10^{-6}\frac{\sqrt N}{\log X}, 
    \]
    and
    \[
    \frac{\mathcal E(X)}{3}\leq 0.215\frac{\sqrt N}{\log X}. 
    \]
    Therefore,
    \begin{equation}\label{eq:weighted_sum_final_upper}
    \frac{1}{3}\sum_{\substack{p\in\mathcal P\\ z\leq p<y}}
    \left(1-\frac{\log p}{\log y}\right)S(\mathcal A_p,\mathcal P,z)
    \leq 2.0687\frac{\sqrt N}{\log X}. 
    \end{equation}
    
    Finally, substituting \eqref{eq:r3_q0_remainder_bound}, \eqref{eq:SAz_final_lower}, and \eqref{eq:weighted_sum_final_upper} into \eqref{eq:r3_lower_start}, we obtain
    \[
    r_3(\mathcal A)> 0.0249 \frac{\sqrt N}{\log X}>0.
    \]
    Thus \(r_3(\mathcal A) > 0\) when \(N\geq 10^{31}\), and so there exists \(a\in\mathcal A\) with \(\Omega(a)\leq 3\). Since \(\mathcal A=\mathcal A(N)\subset (n^2,(n+1)^2)\), this proves Theorem~\ref{thm:main_omega3}.
\end{proof}

\section*{Acknowledgements}

The author thanks Jonathan Sorenson for suggesting the idea underlying Lemma~\ref{lem:small_n}, Adrian Dudek for careful proofreading and helpful suggestions, and Daniel Johnston for valuable comments and suggestions that improved the paper. He is also grateful to the anonymous referee for carefully reading the manuscript and for many helpful comments that improved the exposition. This research was supported by the Commonwealth through an Australian Government Research Training Program (RTP) Scholarship.

\bibliographystyle{abbrv}
\bibliography{refs}

@article{pintz2009landau,
    author  = {Pintz, J{\'a}nos},
    title   = {{Landau's Problems on Primes}},
    journal = {J. Théor. Nombres Bordeaux},
    year    = {2009},
    volume  = {21},
    number  = {2},
    pages   = {357--404},
}

@article{baker_harman_pintz2001,
    author  = {Baker, R. C. and Harman, G. and Pintz, J{\'a}nos},
    title   = {{The Difference Between Consecutive Primes, {II}}},
    journal = {Proc. Lond. Math. Soc.},
    year    = {2001},
    volume  = {83},
    number  = {3},
    pages   = {532--562},
}

@book{brun1920crible,
    author    = {Brun, Viggo},
    title     = {{Le crible d'{\'E}ratosth{\`e}ne et le th{\'e}or{\`e}me de Goldbach}},
    year      = {1920},
    series    = {Skrifter utgit av Videnskapsselskapet i Kristiania. I. Matematisk-naturvidenskabelig Klasse},
    number    = {3},
    address   = {Kristiania},
    publisher = {J. Dybwad},
    pagetotal = {36},
}

@article{chen1975almostprimesinterval,
    author   = {Chen, Jing-Run},
    title    = {{On the Distribution of Almost Primes in an Interval}},
    journal  = {Scientia Sinica},
    year     = {1975},
    volume   = {18},
    pages    = {611--627},
    zbl      = {0381.10033},
    mrnumber = {MR05615584},
}

@article{dudek_johnston2026apba,
    author  = {Dudek, Adrian W. and Johnston, Daniel R.},
    title   = {{Almost Primes Between All Squares}},
    journal = {J. Number Theory},
    year    = {2026},
    volume  = {278},
    pages   = {726--745},
    doi     = {10.1016/j.jnt.2025.05.009},
}

@article{bordignon_johnston_starichkova2025,
    author  = {Bordignon, Matteo and Johnston, Daniel R. and Starichkova, Valeriia},
    title   = {{An Explicit Version of {Chen}'s Theorem and the Linear Sieve}},
    journal = {Int. J. Number Theory},
    year    = {2025},
    volume  = {21},
    number  = {10},
    pages   = {2497--2572},
    doi     = {10.1142/S1793042125501192},
}

@inproceedings{kuhn1954neue_abschaetzungen,
    author    = {Kuhn, P.},
    title     = {{Neue Absch{\"a}tzungen auf Grund der Viggo Brunschen Siebmethode}},
    booktitle = {Proceedings of the 12th Scandinavian Mathematical Congress (Lund, 1953)},
    year      = {1954},
    pages     = {160--168},
}

@article{richert1969selberg_weights,
    author = {Richert, Hans-Egon},
    title   = {{Selberg's Sieve with Weights}},
    journal = {Mathematika},
    year    = {1969},
    volume  = {16},
    number  = {1},
    pages   = {1--22},
}

@misc{johnston2026primesprimescubes,
    author = {Johnston, Daniel R. and Sorenson, Jonathan P. and Thomas, Simon N. and Webster, Jonathan E.},
    title  = {{Primes and Almost Primes Between Cubes}},
    year   = {2026},
    note   = {Preprint, arXiv:2601.15564},
}

@article{sorenson_webster_2025_legendre,
    author  = {Sorenson, Jonathan and Webster, Jonathan},
    title   = {{An Algorithm and Computation to Verify Legendre's Conjecture up to {$7\cdot 10^{13}$}}},
    journal = {Res. Number Theory},
    year    = {2025},
    volume  = {11},
    number  = {1},
    pages   = {4},
    doi     = {10.1007/s40993-024-00589-4},
}

@misc{pzktupel_risinggap,
  author       = {Nicely, Thomas R.},
  title        = {{Prime Number Gap Record Rising}},
  howpublished = {\url{https://www.pzktupel.de/RecordGaps/risinggap.php}},
  note         = {Record table with contributor attributions. Last updated 16 February 2026. Accessed 28 February 2026},
  year         = {2026}
}

@article{cai2008chen2,
    author  = {Cai, Yingchun},
    title   = {{A Remark on {Chen}'s Theorem ({II})}},
    journal = {Chinese Ann. Math. Ser. B},
    volume  = {29},
    number  = {6},
    year    = {2008},
    pages   = {687--698},
}

@article{rosser_schoenfeld1962,
    author  = {Rosser, J. Barkley and Schoenfeld, Lowell},
    title   = {{Approximate Formulas for Some Functions of Prime Numbers}},
    journal = {Ill. J. Math.},
    volume  = {6},
    number  = {1},
    year    = {1962},
    pages   = {64--94},
}

@misc{mertensbounds_github,
    author       = {Johnston, Daniel R.},
    title        = {{MertenBounds}},
    howpublished = {\url{https://github.com/DJmath1729/MertenBounds}},
    note         = {GitHub repository. Accessed 28 February 2026},
    year         = {2026}
}

@book{greaves2001sieves,
    author    = {Greaves, George},
    title     = {{Sieves in Number Theory}},
    series    = {Ergebnisse der Mathematik und ihrer Grenzgebiete. 3. Folge / A Series of Modern Surveys in Mathematics},
    volume    = {43},
    publisher = {Springer},
    year      = {2001},
    isbn      = {9783540416470},
}

@article{vanlalngaia2017explicit,
    author  = {Vanlalngaia, Ramdinmawia},
    title   = {{Explicit Mertens Sums}},
    journal = {Integers},
    volume  = {17},
    year    = {2017},
    pages   = {A11}
}

@article{ramare2013mobius,
    author  = {Ramar{\'e}, Olivier},
    title   = {{From Explicit Estimates for Primes to Explicit Estimates for the {M{\"o}bius} Function}},
    journal = {Acta Arith.},
    volume  = {157},
    number  = {4},
    year    = {2013},
    pages   = {365--379},
    doi     = {10.4064/aa157-4-4},
}

@article{glasby2021most,
    author  = {Glasby, S. P. and Praeger, Cheryl E. and Unger, W. R.},
    title   = {{Most Permutations Power to a Cycle of Small Prime Length}},
    journal = {Proc. Edinb. Math. Soc.},
    year    = {2021},
    volume  = {64},
    number  = {2},
    pages   = {234--246},
}

@article{iwaniec1980new,
    author  = {Iwaniec, Henryk},
    title   = {{A New Form of the Error Term in the Linear Sieve}},
    journal = {Acta Arith.},
    year    = {1980},
    volume  = {37},
    number  = {1},
    pages   = {307--320},
}
\end{document}